\newenvironment{proof}[1][Proof]{\begin{trivlist}
\item[\hskip \labelsep {\bfseries #1}]}{\end{trivlist}}
\newcounter{example}
\newtheorem{thm}{Theorem}[section]
\newtheorem{prop}{Proposition}
\DeclareMathOperator{\sech}{sech}
\title{Structure-Preserving Galerkin POD Reduced-Order Modeling of Hamiltonian Systems}
\author{Yuezheng Gong\thanks{Applied and Computational Mathematics Division, Beijing Computational Science Research Center, Beijing 100193, China. Email: {gongyuezheng@csrc.ac.cn}.}
\and Qi Wang \thanks{Department of Mathematics, University of South Carolina, Columbia, SC 29208.  Email: {qwang@math.sc.edu}. }
\and Zhu Wang \thanks{ Corresponding author.
           Department of Mathematics, University of South Carolina, Columbia, SC 29208.
           Email: {wangzhu@math.sc.edu}. }
}
\date{}
\begin{document}
\maketitle
\begin{abstract}
The proper orthogonal decomposition reduced-order models (POD-ROMs) have been widely used as a computationally efficient surrogate models in large-scale numerical simulations of complex systems.
However, when it is applied to a Hamiltonian system, a naive application of the POD method can destroy its Hamiltonian structure in the reduced-order model.
In this paper, we develop a new reduce-order modeling approach for the Hamiltonian system, which uses the traditional framework of Galerkin projection-based model reduction but modifies the ROM so that the appropriate Hamiltonian structure is preserved.
Since the POD truncation can degrade the approximation of the Hamiltonian function, we propose to use the POD basis from shifted snapshots to improve the Hamiltonian function approximation.
We further derive a rigorous {\em a priori} error estimate of the structure-preserving ROM and demonstrate its effectiveness in several numerical examples.
This approach can be readily extended to dissipative Hamiltonian systems, port-Hamiltonian systems etc.
\end{abstract}

{\bf Keywords.}~Proper orthogonal decomposition; model reduction; Hamiltonian systems; structure-preserving algorithms.

 {\bf AMS subject classifications.} 37M25, 65M99, 65P10, 93A15

\newcommand{\lp}{\left(}
\newcommand{\rp}{\right)}
\newcommand{\lno}{\left\|}
\newcommand{\rno}{\right\|}
\newcommand{\id}{\text{ d}}

\newcommand{\ou}{\overline{u}}
\newcommand{\opsi}{\overline{\psi}}
\newcommand{\oq}{\overline{q}}
\newcommand{\oT}{\overline{T}}
\newcommand{\E}{\mathbbm{E}}
\newcommand{\orho}{\overline{\rho}}

\newcommand{\s}{\sigma}
\renewcommand{\k}{\kappa}
\newcommand{\p}{\partial}
\newcommand{\om}{\omega}
\newcommand{\Om}{\Omega}
\newcommand{\pOm}{\partial \Omega}
\newcommand{\e}{\epsilon}
\renewcommand{\a}{\alpha}
\renewcommand{\b}{\beta}
   \newcommand{\eps}{\varepsilon}
   \newcommand{\EX}{{\Bbb{E}}}
   \newcommand{\PX}{{\Bbb{P}}}

\newcommand{\nd}{{\nabla \cdot}}

\newcommand{\cF}{{\cal F}}
 
\newcommand{\cD}{{\cal D}}
\newcommand{\cO}{{\cal O}}

\newtheorem{remark}{Remark}[section]
\newtheorem{lemma}{Lemma}[section]
\newtheorem{theorem}{Theorem}[section]
\newtheorem{corollary}{Corollary}[section]
\newtheorem{proposition}{Proposition}[section]
\newtheorem{definition}{Definition}[section]
\newcommand{\kmodel}{k_{\mbox{Model}}}
\newcommand{\obu}{\overline{\bf u}}
\newcommand{\oobu}{\overline{\overline{\bf u}}}
\newcommand{\be}{{\bf e}}
\newcommand{\bk}{{\bf k}}
\newcommand{\bs}{{\bf s}}
\newcommand{\bu}{{\bf u}}
\newcommand{\bD}{{\bf D}}
\newcommand{\bN}{{\bf N}}
\newcommand{\bS}{{\bf S}}
\newcommand{\oou}{\overline{\overline{u}}}
\newcommand{\op}{\overline{p}}
\newcommand{\of}{\overline{f}}
\newcommand{\obf}{\overline{\bf f}}
\newcommand{\ow}{\overline{w}}
\newcommand{\ov}{\overline{v}}
\newcommand{\ophi}{\overline{\phi}}
\newcommand{\oS}{\overline{S}}
\newcommand{\obS}{\overline{\bf S}}
\newcommand{\bv}{{\bf v}}
\newcommand{\obv}{\overline{\bf v}}
\newcommand{\bc}{{\bf c}}
\newcommand{\by}{{\bf y}}
\newcommand{\bA}{{\bf A}}
\newcommand{\bB}{{\bf B}}
\newcommand{\bI}{{\bf I}}
\newcommand{\bY}{{\bf Y}}
\newcommand{\bw}{{\bf w}}
\newcommand{\bW}{{\bf W}}
\newcommand{\bU}{{\bf U}}
\newcommand{\obw}{\overline{\bf w}}
\newcommand{\bz}{{\bf z}}
\newcommand{\bZ}{{\bf Z}}
\newcommand{\obZ}{\overline{\bf Z}}
\newcommand{\bff}{{\bf f}}
\newcommand{\bee}{{\bf e}}
\newcommand{\bn}{{\bf n}}
\newcommand{\bx}{{\bf x}}
\newcommand{\bX}{{\bf X}}
\newcommand{\bH}{{\bf H}}
\newcommand{\bV}{{\bf V}}
\newcommand{\bL}{{\bf L}}
\newcommand{\bg}{{\bf g}}
\newcommand{\bj}{{\bf j}}
\newcommand{\br}{{\bf r}}
\newcommand{\grads}{\nabla^s}
\def\PP{{{\rm l}\kern - .15em {\rm P} }}
\def\PN2{{\PP_{N}-\PP_{N-2}}}
\newcommand{\erf}[1]{\mbox{erf}\left(#1\right)}
\newcommand{\D}{\nabla}
\newcommand{\I}{\mathbb{I}}
\newcommand{\N}{\mathbb{N}}
\newcommand{\R}{\mathbb{R}}
\newcommand{\Z}{\mathbb{Z}}
\newcommand{\mathR}{\R}
\newcommand{\mathN}{\N}
\newcommand{\mathZ}{\Z}
\newcommand{\mathI}{\mathbbm{I}}
\newcommand{\btau}{\boldsymbol{\tau}}
\newcommand{\bphi}{\boldsymbol{\phi}}
\newcommand{\bvarphi}{\boldsymbol{\varphi}}
\newcommand{\bpsi}{\boldsymbol{\psi}}
\newcommand{\bfeta}{\boldsymbol{\eta}}
\newcommand{\blambda}{\boldsymbol{\lambda}}
\newcommand{\bPhi}{\boldsymbol{\Phi}}
\newcommand{\bPsi}{\boldsymbol{\Psi}}
\newcommand{\obphi}{\overline {\boldsymbol{\phi}}}
\newcommand{\bomega}{\boldsymbol{\omega}}
\newcommand{\bsigma}{\boldsymbol{\sigma}}
\newcommand{\orhoprime}{\overline{\rho^{\prime}}}
\newcommand{\bus}{{\bf u}^*}
\newcommand{\By}{\mathcal B(\by)}
\newcommand{\eci}[1]{\mathcal E_{#1}}
\newcommand{\dpyi}[1]{\delta_{#1}^+(\by)}
\newcommand{\dmyi}[1]{\delta_{#1}^-(\by)}
\newcommand{\cA}{{\mathcal A(\by)}}
\newcommand{\dyi}[1]{\delta_{#1}(\by)}
\newcommand{\cG}{{\mathcal G(\bx,\by)}}
\newcommand{\cGi}[1]{{\mathcal G_{#1}(\bx,\by)}}
\newcommand{\pti}{\partial_i}
\newcommand{\ptii}[1]{\partial_{#1}}
\newcommand{\ba}{{\bf a}}

\newcommand{\rey}{\mbox{Re}}

\newcommand{\tnp}{t^{k+1}}
\newcommand{\bb}{{\bf b}}
\newcommand{\fnp}{f^{k+1}}
\newcommand{\prp}{P_R^{'}}
\newcommand{\pr}{P_R}
\newcommand{\rn}{r^k}
\newcommand{\ur}{\bu_r}
\newcommand{\urn}{\bu_r^k}
\newcommand{\utn}{\bu_t^k}
\newcommand{\urnp}{\bu_r^{k+1}}
\newcommand{\utnp}{\bu_t^{k+1}}
\newcommand{\un}{\bu^k}
\newcommand{\unp}{\bu^{k+1}}
\newcommand{\vr}{\bv_r}
\newcommand{\wrn}{\bw_r^k}
\newcommand{\etan}{\eta^k}
\newcommand{\etanp}{\eta^{k+1}}
\newcommand{\prn}{\phi_r^k}
\newcommand{\prnp}{\phi_r^{k+1}}
\newcommand{\prz}{\phi_r^{0}}
\newcommand{\prm}{\phi_{r}^{M}}
\newcommand{\sNn}{\sum\limits_{k=0}^{M-1}}
\newcommand{\sN}{\sum\limits_{k=0}^{M}}
\newcommand{\asNn}{\frac{1}{M-1}\sNn}
\newcommand{\asN}{\frac{1}{M}\sN}
\newcommand{\uph}{\upsilon^h}
\newcommand{\half}{\frac{1}{2}}
\section{Introduction}
\noindent \indent Hamiltonian systems arise in many applications such as mechanics, meteorology and weather prediction, electromagnetism, and modeling of biological oscillators, etc.
In most of the systems,   large-scale computation and long-time integration are required so that preserving intrinsic properties of the system is always desired as an important criterion in the development of numerical schemes.
It is well known that numerical methods such as geometric integrator or structure-preserving algorithms are able to exactly preserve structural properties of Hamiltonian systems.
For Hamiltonian ordinary differential equations (ODEs), development of structure-preserving schemes has achieved a remarkable success, such as the symplectic algorithms proposed in \cite{Hairer02,Feng03}.
In the past two decades, various symplectic algorithms have been extended to Hamiltonian partial differential equations (PDEs) to preserve the multi-symplectic conservation law \cite{Bridges06,Wang2013}.

In recent years, there has been an increasing emphasis on
constructing numerical methods to
preserve certain invariant quantities such as the total energy of continuous dynamical systems.
Various discrete gradient methods have been proposed in the literature, see for example
\cite{Gonzalez96,McLachlan99}.
The averaged vector field (AVF) method was proposed in \cite{Quispel08} for
canonical Hamiltonian systems, in which accurate computation of
integrals are required.
The method is extended in \cite{Hairer10,Cohen11} to arbitrarily high order and non-canonical Hamiltonian
systems.
Based on the AVF method, a systematic energy-preserving or energy dissipation
method is developed in \cite{Celledoni2012}.
The discrete variational derivative method is developed in \cite{Furihata1999} that
inherits energy conservation or dissipation properties for a large class of PDEs.
The method is further generalized in \cite{Matsuo2001,Furihata2011} to complex-valued nonlinear PDEs.
The concept of the discrete variational derivative and a general framework for deriving integral-preserving numerical
methods for PDEs were proposed in \cite{Dahlby2011}.
A class of new structure-preserving methods for multi-symplectic Hamiltonian PDEs was designed in \cite{Gong2014}.

Since many applications of Hamiltonian systems involve repeated, large-scale numerical simulations, reduced-order modeling can be employed to obtain an efficient surrogate model.
One such model reduction technique is the proper orthogonal decomposition (POD) method, which has been successfully applied to many time-dependent, nonlinear PDEs (\cite{bui2007goal,carlberg2011low,chaturantabut2010nonlinear,daescu2008dual,HLB96,iollo2000stability,KV01,sirisup2004spectral,lassila2014model}).
The POD method is a data-driven approach, which extracts a few leading order, orthogonal basis functions from snapshots.
By approximating the state variable in the subspace spanned by this basis set and combining with the Galerkin (or Petrov-Galerkin) methods, one can construct a low-dimensional dynamical system to approximate the original system.
Because the dimension of the reduced-order model (ROM) is low, it is computationally inexpensive for numerical simulations.
Although being successfully applied to many PDE models, when a Hamiltonian system is considered, the POD-ROM is not able to preserve some desired physical quantities because the structure of the original system may not be retained in the dynamical system.

This issue has been discussed recently and attempts are made to address it.
For example, structure-preserving interpolatory projection methods are developed in \cite{beattie2009interpolatory} for keeping structures  such as symmetry of linear dynamical systems. It is then extended to port-Hamiltonian systems via tangential rational interpolation in \cite{gugercin2012structure}.
For port-Hamiltonian systems, structure-preserving Petrov-Galerkin reduced models  are introduced in \cite{beattie2011structure}, in which the POD-based and $\mathcal{H}_2$-based projection subspaces are discussed.
A combination of POD and $\mathcal{H}_2$ quasi-optimal selections of the subspace as well as a structure-preserving nonlinear model reduction via discrete empirical interpolation method (DEIM) is recently developed in \cite{chaturantabut2016structure}.
A proper symplectic decomposition (PSD) approach based on the symplectic Galerkin projection is proposed in \cite{peng2016symplectic} for developing ROMs of the Hamiltonian PDEs with a symplectic structure.
Three algorithms are designed to extract the PSD basis from snapshots computed.
There are some other model reduction approaches that aim to preserve Lagrangian structures, for instance, in \cite{carlberg2015preserving,farhat2015structure}.

In this paper, we use the standard framework of Galerkin projection-based model reduction, but propose a structure-preserving ROM of Hamiltonian systems.
The new ROM possesses the appropriate Hamiltonian structure; however, due to the POD truncation, there may exist discrepancies in the approximation to the Hamiltonian function between the new ROM and the original or the full-order model (FOM).
We then introduce the POD basis from shifted snapshots to improve the reduced-order approximation.

The rest of this paper is organized as follows.
In Section \ref{sec: alg}, we introduce the structure-preserving Galerkin (SPG) POD reduced-order modeling approach for Hamiltonian systems;
Several issues on the implementation are discussed in Section \ref{sec:imp};
An {\em a priori} error estimate is presented in Section \ref{sec:err};
The effectiveness of the proposed ROM is demonstrated through several numerical examples in Section \ref{sec: num};
A few concluding remarks are drawn in the last section.

\section{Structure-Preserving Galerkin ROMs}\label{sec: alg}

\noindent \indent We consider a general Hamiltonian PDE system
\begin{equation}
\dot{\bu} = \mathcal{D}\, \frac{\delta \mathcal{H}}{\delta \bu},
\label{eq: ham_pde}
\end{equation}
where $\mathcal{H}(\bu)$ is the Hamiltonian of the system   which often corresponds to its total energy.
It preserves $\mathcal{H}(\bu)$ invariant if $\mathcal{D}$ is a skew-adjoint operator.

Due to the need for long-term integrations, numerical methods for solving the Hamiltonian PDE system are expected to possess certain stability and preservation properties.
Based on the property of the Hamiltonian system, keeping the energy invariant is usually used as a basic guiding principle while designing a robust numerical scheme.
For the purpose of reducing the computational cost, the POD-ROM can be considered in repeated, large-scale numerical simulations of the Hamiltonian system.
However, the standard Galerkin projection-based model reduction technique may destroy the structure of the reduced order system.
In this section, we develop a new POD-ROM for the Hamiltonian system, which uses the Galerkin projection-based model reduction technique, but is modified to retain the appropriate structure of the PDE system.
In the following, we briefly introduce the POD method (for details on POD, the reader is referred to \cite{HLB96}).


\subsection{The proper orthogonal decomposition approximation}

\noindent \indent The POD method extracts essential information from some snapshot data of a system, which is then used to form a global basis.
The original state variable is then approximated in the space spanned by only a handful of dominant basis functions.

Suppose the system is numerically discretized by using certain methods, in which the number of degrees of freedom (DOF) in space is $n$.
Let $\by_i$ represent the numerical solution at time instance $t_i$.
The snapshot data $\bY$ consists of numerical solutions at selected time instances $t_1, \ldots, t_m$, i.e., $\bY=[\by_1, \ldots, \by_m]\in \mathbb{R}^{n\times m}$.
The POD method seeks an orthonormal basis $\{\bphi_1, \ldots, \bphi_r\}$ from the optimization problem
\begin{equation*}
\min_{Rank(\bPhi)= r} \sum_{j=1}^M
  \Big\| \by_j - \bPhi \bPhi^\top \by_j \Big\|^2
  \quad s. t. \quad
  \bPhi^\top\bPhi= I_r,
 \label{eq:pod_h}
\end{equation*}
where $\bPhi= [\bphi_1, \ldots, \bphi_r]\in \mathbb{R}^{n\times r}$, $I_r$ is the $r\times r$ identity matrix, and
$\|\cdot\|$ denotes the 2-norm in the Euclidean space throughout the paper.

It is well-known that the POD basis vectors, $\bphi_1, \ldots, \bphi_r$, are the left singular vectors of $\bY$ corresponding to the first $r$ leading nonzero singular values $\sigma_1\geq \sigma_2\geq \ldots\geq \sigma_r>0$. When $\bY$ is low-and-fat ($n<<m$), the POD basis can be found by using the SVD algorithm. Conversely, if $\bY$ is tall-and-skinny ($n>>m$), the method of snapshots can be applied.
When parallel computing is implemented in large-scale computing problems, one can use an approximate partitioned method of snapshots to further reduce the computational complexity and the communication volume for generating the POD basis \cite{Wang2015approximate}.

\begin{prop}
\label{pr:poderr_dis}
Let $\bPhi$ be the $r$-dimensional POD basis, the rank of snapshot matrix $\bY$ be $d$ and $\sigma_1\geq \sigma_2\geq \ldots\geq \sigma_d>0$ be nonzero singular values of $\bY$.
Then the POD projection error of the snapshot matrix satisfies (\cite{KV01,chaturantabut2012state})
\begin{equation}
\sum_{j=1}^M
  \Big\| \by_j - \bPhi \bPhi^\top \by_j \Big\|^2
  = \sum\limits_{j=r+1}^d \sigma_j^2.
 \label{eq:pod_herr}
\end{equation}
\end{prop}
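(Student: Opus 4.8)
The plan is to reduce the statement to the classical Schmidt--Eckart--Young theorem on best low-rank approximation. The key observation is that the objective function in the POD optimization problem can be rewritten in terms of the Frobenius norm of the snapshot matrix. Specifically, since the columns of $\bPhi$ are orthonormal, the quantity $\bPhi\bPhi^\top$ is the orthogonal projector onto the $r$-dimensional POD subspace, and I would first verify the identity
\[
\sum_{j=1}^M \Big\| \by_j - \bPhi\bPhi^\top \by_j \Big\|^2
  = \Big\| \bY - \bPhi\bPhi^\top \bY \Big\|_F^2,
\]
which simply collects the per-column Euclidean norms into the squared Frobenius norm of the residual matrix $\bY - \bPhi\bPhi^\top\bY$.

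Next I would introduce the singular value decomposition $\bY = U\Sigma V^\top$, where $U\in\mathbb{R}^{n\times n}$ and $V\in\mathbb{R}^{m\times m}$ are orthogonal and $\Sigma$ carries the singular values $\sigma_1\geq\cdots\geq\sigma_d>0$ on its diagonal (with $d$ the rank of $\bY$). Recalling that the POD basis vectors $\bphi_1,\ldots,\bphi_r$ are precisely the first $r$ left singular vectors, the matrix $\bPhi\bPhi^\top\bY$ is exactly the rank-$r$ truncation of the SVD, i.e. $\bPhi\bPhi^\top\bY = \sum_{j=1}^r \sigma_j u_j v_j^\top$ where $u_j, v_j$ are the columns of $U, V$. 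Therefore the residual is the tail of the SVD,
\[
\bY - \bPhi\bPhi^\top\bY = \sum_{j=r+1}^d \sigma_j u_j v_j^\top,
\]
and using the unitary invariance of the Frobenius norm together with the orthonormality of the singular vectors, its squared Frobenius norm collapses to $\sum_{j=r+1}^d \sigma_j^2$. Chaining this with the identity from the first step yields the claimed equality \eqref{eq:pod_herr}.

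The main obstacle, such as it is, lies not in any hard estimate but in the bookkeeping needed to justify that the POD projection genuinely realizes the SVD truncation rather than merely bounding it. That is, I must confirm that the minimizer of the optimization problem coincides with the leading left singular vectors, so that the residual is exactly the SVD tail and the relation is an equality rather than an inequality. This is where the Schmidt--Eckart--Young theorem does the real work: it guarantees that the best rank-$r$ approximation in Frobenius norm is the truncated SVD, and that no other rank-$r$ projector can do better. Since the statement asserts equality \emph{for the POD basis} (which is defined as the SVD minimizer), the argument is a direct computation once the SVD structure is in place, and I expect the proof to be short.
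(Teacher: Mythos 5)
Your proposal is correct, and it is essentially the argument behind this result in the references the paper cites (\cite{KV01,chaturantabut2012state}): the paper itself states the proposition without proof, and the standard derivation is exactly your chain of identities --- sum of squared column residuals equals $\|\bY-\bPhi\bPhi^\top\bY\|_F^2$, the POD projector realizes the rank-$r$ SVD truncation, and orthonormality of the singular vectors collapses the residual's Frobenius norm to $\sum_{j=r+1}^{d}\sigma_j^2$, with Schmidt--Eckart--Young needed only to identify the optimization minimizer with the leading left singular vectors. The only blemish is inherited from the paper's own notation (the sum runs to $M$ while the snapshot matrix has $m$ columns), which your argument handles correctly by treating the sum as running over all columns of $\bY$.
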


We shall also consider an ideal case, in which the entire continuous trajectory of the system is assumed to be available on the whole time interval $[0, T]$.
Taken the trajectory $\by(t)$ to be the snapshot, which is a continuous, vector-valued function in $\mathbb{R}^n$.
The POD method is to find a set of optimal basis functions $\{\bphi_1, \ldots, \bphi_r\}$ by minimizing the projection error of $\by(t)$ onto the subspace spanned by the basis, i.e.,
\begin{equation*}
\min_{Rank(\bPhi)= r}\int_0^T
  \Big\| \by(t) - \bPhi \bPhi^\top \by(t) \Big\|^2 \, d t
  \quad s. t. \quad
  \bPhi^\top\bPhi= I_r,
 \label{eq:pod}
\end{equation*}
where $\bPhi= [\bphi_1, \ldots, \bphi_r]\in \mathbb{R}^{n\times r}$ and $I_r$ is the identity matrix.
The optimization solution is equivalent to find the first $r$ dominant eigenvectors of the snapshot covariant matrix ${\bf R}=\int_0^T \by(t)\by(t)^\top\, dt$.
\begin{prop}
\label{pr:poderr}
Let $\bPhi$ be the $r$-dimensional POD basis, the rank of snapshot covariant matrix ${\bf R}$ be $d$ and $\lambda_1\geq \lambda_2\geq \ldots\geq \lambda_d>0$ be nonzero eigenvalues of ${\bf R}$.
Then the POD projection error of the snapshot satisfies (\cite{KV01,chaturantabut2012state})
\begin{equation}
\int_0^T
  \Big\| \by(t) - \bPhi \bPhi^\top \by(t) \Big\|^2 \, d t
  = \sum\limits_{j=r+1}^d \lambda_j.
 \label{eq:poderr}
\end{equation}
\end{prop}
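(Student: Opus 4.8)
The plan is to reduce the integral identity to a pair of trace computations, leaning on the characterization of the POD basis stated just above: the columns $\bphi_1,\ldots,\bphi_r$ of $\bPhi$ are the eigenvectors of the covariance matrix ${\bf R}=\int_0^T \by(t)\by(t)^\top\,dt$ associated with its $r$ largest eigenvalues $\lambda_1,\ldots,\lambda_r$. I would treat this spectral characterization as the input to the argument, so that the remaining work is purely algebraic.

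First I would exploit the fact that $P:=\bPhi\bPhi^\top$ is an orthogonal projection, i.e. $P^\top=P$ and $P^2=P$, which follows from $\bPhi^\top\bPhi=I_r$. This gives the pointwise identity $\|\by(t)-P\by(t)\|^2=\by(t)^\top(I-P)\by(t)=\|\by(t)\|^2-\by(t)^\top\bPhi\bPhi^\top\by(t)$ for every $t$. Integrating over $[0,T]$ then splits the projection error into two pieces, each of which I would rewrite as a trace and interchange with the time integration by linearity.

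For the first piece, $\int_0^T\|\by(t)\|^2\,dt=\int_0^T \mathrm{tr}\big(\by(t)\by(t)^\top\big)\,dt=\mathrm{tr}({\bf R})=\sum_{j=1}^d\lambda_j$, using that the nonzero eigenvalues exhaust the trace since ${\bf R}$ has rank $d$. For the second piece, the cyclic property of the trace gives $\by^\top\bPhi\bPhi^\top\by=\mathrm{tr}(\bPhi^\top\by\by^\top\bPhi)$, so after integrating I obtain $\mathrm{tr}(\bPhi^\top{\bf R}\bPhi)$. Here the spectral characterization enters: ${\bf R}\bphi_j=\lambda_j\bphi_j$ for $j\le r$ together with orthonormality yields $\bPhi^\top{\bf R}\bPhi=\mathrm{diag}(\lambda_1,\ldots,\lambda_r)$, whose trace is $\sum_{j=1}^r\lambda_j$. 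Subtracting the two pieces produces $\sum_{j=1}^d\lambda_j-\sum_{j=1}^r\lambda_j=\sum_{j=r+1}^d\lambda_j$, which is the claim.

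Since the algebra is elementary, the only genuine subtlety is justifying the two interchanges of integration and trace; both are immediate from linearity once one assumes $\by(\cdot)$ is (say) continuous, so that the integrals defining ${\bf R}$ converge entrywise. If one did not wish to take the eigenvector characterization of the POD basis for granted, the real work would instead shift to establishing that the constrained minimizer of the projection functional is the span of the dominant eigenvectors -- a Ky Fan / Courant--Fischer type argument -- but that is precisely the content of the optimality statement preceding the proposition, which I would invoke rather than reprove.
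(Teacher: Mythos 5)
Your proof is correct, but there is nothing in the paper to compare it against: the proposition is stated as a known result, with the proof deferred entirely to the cited POD literature (Kunisch--Volkwein and Chaturantabut--Sorensen), so the paper contains no internal argument for it. Your derivation is the standard one found in those references: write $P=\bPhi\bPhi^\top$, use symmetry and idempotence to get $\|\by(t)-P\by(t)\|^2=\|\by(t)\|^2-\by(t)^\top P\,\by(t)$, interchange trace and time integration, and evaluate $\mathrm{tr}({\bf R})=\sum_{j=1}^{d}\lambda_j$ against $\mathrm{tr}\big(\bPhi^\top{\bf R}\,\bPhi\big)=\sum_{j=1}^{r}\lambda_j$ via the eigenrelation ${\bf R}\bphi_j=\lambda_j\bphi_j$. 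Your choice to take the dominant-eigenvector characterization of $\bPhi$ as input rather than reprove it is also appropriate, since the paper asserts exactly that characterization in the sentence preceding the proposition; the only thing worth stating explicitly is that the identity \eqref{eq:poderr} uses this eigenvector property of $\bPhi$ (not merely orthonormality and optimality), which your trace computation indeed does.
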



\subsection{The structure-preserving POD-ROM}
\noindent \indent After a spatial discretization of (\ref{eq: ham_pde}), the finite dimensional Hamiltonian ODE system is given by  (e.g. see \cite{Celledoni2012})
\begin{equation}
\dot{\bu} = \bD\, \nabla_{\bu} H(\bu),
\label{eq: ham_fom}
\end{equation}
where $\bD$ is a skew-symmetric matrix.
The system is complemented by an initial condition $\bu(t_0)= \bu_0$ and appropriate boundary conditions.
Note that we abuse the notation here by still using $\bu$ to denote the discrete variable $\bu\in {\cal R}^n$, where $n$ is the dimension of the discrete variable.

Suppose the POD basis matrix $\bPhi$ is obtained, the reduced-order approximation of the state variable $\bu$ is $\bu^r = \bPhi \ba(t)$, where $\ba(t)$ is the unknown coefficient vector.
Substituting $\bu$ with $\bu^r$ in \eqref{eq: ham_fom}, we have
\begin{equation*}
\Phi\dot{\ba} = \bD\, \nabla_{\bu} H(\bPhi \ba).
\label{eq: ham_rom}
\end{equation*}
Applying the Galerkin method by multiplying $\bPhi^\top$ on both sides and using the fact $\bPhi^\top \bPhi= I_r$, we have
\begin{equation}
\dot{\ba} = \bPhi^\top\bD\, \nabla_{\bu} H(\bPhi \ba).
\label{eq: ham_rom1}
\end{equation}
The time derivative of Hamiltonian function $H$ (usually, energy of the system) is
\begin{eqnarray*}
\frac{d}{dt}H(\bPhi \ba)&=&[\nabla_\ba H(\bPhi \ba)]^\top \dot{\ba} \nonumber \\
				  &=&[\bPhi^\top \nabla_{\bu} H(\bPhi \ba)]^\top \bPhi^\top\bD\, \nabla_{\bu} H(\bPhi \ba) \nonumber \\
				  &=& \nabla_{\bu} H(\bPhi \ba)^\top \bPhi \bPhi^\top\bD\, \nabla_{\bu} H(\bPhi \ba),
\end{eqnarray*}
where we use the fact that
\begin{equation}
\nabla_{\ba} H(\bPhi \ba) = \bPhi^\top \nabla_{\bu} H(\bPhi \ba).
\label{eq: ham_grad}
\end{equation}
Recall that $\bPhi$ is composed of the dominant $r$ left singular vectors of the snapshot matrix.
In the case of $r= n$, $\bPhi$ is a square unitary matrix and $\bPhi\bPhi^\top= I_n$, we have $\frac{d}{dt}H(\bPhi \ba)= 0$ because $\bD$ is skew-symmetric, then the Hamiltonian is a constant.
However, $r\ll n$ in most cases and $\bPhi$ is not a square matrix.
Thus, $\bPhi\bPhi^\top\bD$ does not have the property of $\bD$  as a skew-symmetric matrix so that the property of the Hamiltonian function is not guaranteed to be preserved.
Next, we follow the same framework of Galerkin projection-based POD reduced order modeling, but modify the model so that the Hamiltonian structure is well kept after applying ROM.





\paragraph{Structure-preserving ROMs}
In order to keep the structure of the Hamiltonian system, we assume that there exists a matrix $\bD_r$ with the same property like $\bD$ such that, in \eqref{eq: ham_rom1},
\begin{equation}
\bPhi^\top\bD = \bD_r\bPhi^\top.
\label{eq: Sr0}
\end{equation}
With the new $\bD_r$, we have the Galerkin projection-based ROM in the following from:
\begin{eqnarray}
\dot{{\ba}} &=& \bPhi^\top \bD \, \nabla_{\bu} H(\bPhi {\ba}) \nonumber\\
			    &=& \bD_r\bPhi^\top \, \nabla_{\bu} H(\bPhi {\ba})\nonumber\\
&= & \bD_r\, \nabla_{\ba} H(\bPhi {\ba})
\label{eq: ham_rom2}
\end{eqnarray}
with initial condition $ {\ba}(t_0)= \bPhi^\top\bu_0$.
In this reduced-order dynamical system, we have the time derivative of the Hamiltonian function $H$ is
\begin{eqnarray*}
\frac{d}{dt}H(\bPhi  {\ba})&=&[\nabla_{ {\ba}} H(\bPhi {\ba})]^\top \dot{ {\ba}} \nonumber \\
				  &=&[\nabla_{ {\ba}} H(\bPhi  {\ba})]^\top \bD_r\, \nabla_{\ba} H(\bPhi  {\ba}) \nonumber \\
				  &=& 0,\quad \text{ if $\bD_r$ is skew-symmetric.}
\end{eqnarray*}
Therefore, the reduced-order model possesses an invariant Hamiltonian, in which the key is how to obtain the new skew symmetric matrix $\bD_r$.

Note that equation \eqref{eq: Sr0} is over-determined for $\bD_r$.
By right multiplying $\bPhi$ on both sides of the equation, we have the normal equation solution
\begin{equation}
\bD_r = \bPhi^\top \bD \bPhi,
\label{eq: Sr}
\end{equation}
which is skew symmetric if $\bD$ is skew symmetric.
Therefore, when an appropriate numerical algorithm is employed, the structure-preserving ROM (SP-ROM) \eqref{eq: ham_rom2}-\eqref{eq: Sr} will keep the Hamiltonian function approximation  invariant as that in the FOM.
We will discuss several implementation issues of the model and present a theoretical analysis in next sections.

\begin{remark}
When the operator $\mathcal{D}$ in equation \eqref{eq: ham_pde} is negative (semi-) definite, the system possesses a non-increasing Lyapunov function $H(\bu)$.
The corresponding dynamical system after a spatial discretization has a negative (semi-) definite coefficient matrix $\bD$.
The standard POD-G ROM doesn't preserve the structure since $\bPhi\bPhi^\top\bD$ is not negative definite in general.
The proposed structure-preserving ROM can still be used to circumvent this problem.
Indeed, by choosing $\bD_r= \bPhi^\top \bD \bPhi$ in \eqref{eq: ham_rom2}, the resulting ROM  guarantees that
$\frac{d}{dt}H(\bPhi {\ba})=[\nabla_{{\ba}} H(\bPhi {\ba})]^\top \bD_r\, \nabla_{\ba} H(\bPhi {\ba})\leq 0$.
Thus, the Lyapunov approximation $H(\bPhi {\ba})$ is non-increasing.
 \end{remark}

\section{Choices on implementations\label{sec:imp}}
\noindent \indent In this section, we describe some details on the implementation.
\subsection{Choice of snapshots}
\noindent \indent Motivated by the error estimate in Section \ref{sec:err}, besides the collections of state variables $\bu(t_j)$ for $j=1, \ldots, M$, we also include in the snapshots weighted gradient information of the Hamiltonian function, $\mu \nabla_{\bu}H(\bu(t_j))$.
Denote the snapshot matrix by
\begin{equation}
\bY=\left[\bu(t_1), \ldots, \bu(t_M), \mu\nabla_{\bu}H(\bu(t_1)), \ldots, \mu\nabla_{\bu}H(\bu(t_M))\right].
\label{eq: snap}
\end{equation}
The POD basis $\bPhi$ functions are the dominant $r$ left singular vectors of the snapshot matrix.

Correspondingly, based on Proposition \ref{pr:poderr}, we have the following POD projection errors under the assumption on the availability of the entire continuous trajectory:
\begin{equation}
\int_0^T
  \Big\| \bu(t) - \bPhi \bPhi^\top \bu(t) \Big\|^2 \, d t
 +
  \mu^2 \int_0^T
 \Big\| \nabla_{\bu}H(\bu(t)) - \bPhi \bPhi^\top \nabla_{\bu}H(\bu(t)) \Big\|^2 \, d t
  = \sum\limits_{j=r+1}^d \lambda_j,
 \label{eq:poderr_h}
\end{equation}
where $\lambda_j$ is the $j$-th leading eigenvalue of the snapshot covariance matrix.

\subsection{Improvement of the Hamiltonian approximation}
Although a few POD basis functions capture most of the system information, the basis truncation would result in a loss of information, which leads to a discrepancy of the Hamiltonian function approximation between the ROM and the FOM.
Since the proposed SP-ROM is able to keep a constant Hamiltonian in the reduced-order simulation, for improving the Hamiltonian function approximation, we only need to adjust its initial value.
To this end, we propose a way to address this issue and compare it with the approach in \cite{peng2016symplectic}.

\paragraph{I. Enrichment of the POD basis (\cite{peng2016symplectic})} It is natural to introduce a new basis that could capture the missing information of the POD approximation at the beginning of the simulation.

The POD projection of initial data $\bu_0$ is given by $P_{\bPhi}\bu_0= \bPhi\bPhi^\top\bu_0$ and the related residual is $\br= ({\bf I}- P_{\bPhi})\bu_0$, which is orthogonal to the POD basis.
When $\br$ is nonzero, it can be normalized and added to the original POD basis set as a new basis function $\bpsi=\frac{\br}{\|\br\|}$.
Obviously, when $\br$ is zero, no action is needed.

Using the enriched basis guarantees the initial information be completely captured. 
Together with the SP-ROM, the invariant Hamiltonian function would be well kept in the reduced-order simulation.
However, the new basis function $\bpsi$ aims to capture initial information, but it will be used as a global basis in the entire simulation; thus, it may degrade the overall accuracy of the state variable approximation.

\paragraph{II. POD basis of shifted snapshots}
We propose a new way to overcome the Hamiltonian discrepancy by using the POD basis from shifted snapshots.
Instead of considering $\bY$ in \eqref{eq: snap}, we use a shifted snapshot set
\begin{equation}
\left[\bu(t_1)-\bu_0, \ldots, \bu(t_M)-\bu_0,
\mu\nabla_{\bu} H(\bu(t_1), \ldots, \mu\nabla_{\bu} H(\bu(t_M)\right].
\label{eq: snap_shift}
\end{equation}
The related POD approximation becomes
\begin{equation}
\bu= \bu_0+\bPhi {\ba}(t),
\label{eq: pod_shift}
\end{equation}
where $\bPhi$ comes from the shifted snapshots and ${\ba}$ is the unknown coefficient.
Substituting \eqref{eq: pod_shift} into the SP-ROM \eqref{eq: ham_rom2}, we have
\begin{equation}
\dot{{\ba}} = \bD_r\, \nabla_{{\ba}} H(\bu_0+\bPhi {\ba}),
\label{eq: ham_rom3}
\end{equation}
where $\bD_r = \bPhi^\top \bD \bPhi$ with the initial condition ${\ba}(t_0) = \bf{0}$.

\section{Error estimate\label{sec:err}}
\noindent \indent In this section, we present an {\em a priori} error estimation of the SP-ROM \eqref{eq: ham_rom2}-\eqref{eq: Sr}.
The derivation mainly follows the state space error estimation developed by Chaturantabut and Sorensen in \cite{chaturantabut2012state} for the POD-DEIM nonlinear model reduction.
The difference lies in the structure of proposed SP-ROM and our choice of snapshots.
In a sequel, we analyze the approximation error of the reduced-order simulation and we shall focus on the error caused by the POD truncation by suppressing the spatial and temporal discretization errors.

Define the Lipschitz constant and logarithmic Lipschitz constant of a mapping $F: \mathbb{R}^n\rightarrow \mathbb{R}^n$, respectively, as follows:
$$
\mathcal{L}[F]= \sup_{\bu\neq \bv}\frac{\|F(\bu)-F(\bv)\|}{\|\bu-\bv\|},
\quad \text{and} \quad
\mathcal{M}[F]= \sup_{\bu\neq \bv}\frac{\left<\bu-\bv, F(\bu)-F(\bv)\right>}{\|\bu-\bv\|^2},
$$
where the Euclidian inner product $\left<\cdot, \cdot\right>: \mathbb{R}^d\times \mathbb{R}^d \rightarrow \mathbb{R}$ is for any positive integer $d$.
The logarithmic Lipschitz constant could be negative, which was used in \cite{chaturantabut2012state} to show that the error of reduced-order solution is uniformly bounded on $t\in [0, T]$ when the map $F$ is uniformly negative monotonic.


\begin{thm}
Let $\bu(t)$ be the solution of the FOM \eqref{eq: ham_fom}, ${\ba}(t)$ be the solution of the SP-ROM  \eqref{eq: ham_rom2}-\eqref{eq: Sr} with the initial condition ${\ba}(t_0)= \bPhi^\top \bu(\cdot, 0)$, the POD approximation error satisfies
$$
\int_0^T \|\bu(t)-\bPhi{\ba}(t)\|^2\, dt \leq C(T, \mu)\sum\limits_{j=r+1}^{d} \lambda_j,
$$
where $\lambda_j$ is the $j$-th leading eigenvalue of the snapshot covariance matrix associated to the snapshots defined in \eqref{eq: snap}.
\label{thm: 1}
\end{thm}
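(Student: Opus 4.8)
The plan is to split the approximation error into the POD projection error and the error of the reduced coefficients, estimate each separately, and tie the latter to the former through a Gronwall argument that exploits the Lipschitz and logarithmic Lipschitz constants. Write $P = \bPhi\bPhi^\top$ for the orthogonal projection onto the POD subspace, and decompose
\[
\bu - \bPhi\ba = \underbrace{(\bI - P)\bu}_{=:\,\boldsymbol{\rho}} + \bPhi\underbrace{(\bPhi^\top\bu - \ba)}_{=:\,\boldsymbol{\theta}}.
\]
Since $\boldsymbol{\rho}$ is orthogonal to the range of $\bPhi$ and $\bPhi$ has orthonormal columns, the cross term vanishes and $\|\bu - \bPhi\ba\|^2 = \|\boldsymbol{\rho}\|^2 + \|\boldsymbol{\theta}\|^2$. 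Integrating in time reduces the theorem to bounding $\int_0^T\|\boldsymbol{\rho}\|^2\,dt$ and $\int_0^T\|\boldsymbol{\theta}\|^2\,dt$.

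First I would dispatch the projection error. The combined POD error identity \eqref{eq:poderr_h} equates a sum of two nonnegative integrals to the eigenvalue tail $\sum_{j=r+1}^d\lambda_j$, so each summand is bounded by that tail individually: $\int_0^T\|(\bI-P)\bu\|^2\,dt \le \sum_{j=r+1}^d\lambda_j$ and $\int_0^T\|(\bI-P)\nabla_\bu H(\bu)\|^2\,dt \le \mu^{-2}\sum_{j=r+1}^d\lambda_j$. The first bound handles $\int_0^T\|\boldsymbol{\rho}\|^2\,dt$ directly. The second --- the control of the gradient projection error --- is precisely why $\mu\nabla_\bu H(\bu)$ was appended to the snapshots in \eqref{eq: snap}, and the factor $\mu^{-2}$ is what will surface in $C(T,\mu)$.

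The core of the argument is the evolution equation for $\boldsymbol{\theta}$. Differentiating and substituting the FOM \eqref{eq: ham_fom} and the SP-ROM \eqref{eq: ham_rom2} gives $\dot{\boldsymbol{\theta}} = \bPhi^\top\bD\,\nabla_\bu H(\bu) - \bPhi^\top\bD P\,\nabla_\bu H(\bPhi\ba)$. Introducing the reduced vector field $F_r(\ba)=\bD_r\,\nabla_\ba H(\bPhi\ba)$ and inserting the intermediate quantities $\bPhi^\top\bD P\,\nabla_\bu H(\bu)$ and $F_r(\bPhi^\top\bu)$, I would split $\dot{\boldsymbol{\theta}}$ into three pieces: $\bPhi^\top\bD(\bI-P)\nabla_\bu H(\bu)$, which the gradient projection error controls; $\bPhi^\top\bD P[\nabla_\bu H(\bu)-\nabla_\bu H(P\bu)]$, bounded by $\|\bD\|\,\mathcal{L}[\nabla_\bu H]\,\|\boldsymbol{\rho}\|$ via Lipschitz continuity; and $F_r(\bPhi^\top\bu)-F_r(\ba)$. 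Pairing with $\boldsymbol{\theta}$, the first two pieces are estimated by Cauchy--Schwarz using $\|\bPhi\boldsymbol{\theta}\|=\|\boldsymbol{\theta}\|$ and $\|P\|=1$, whereas the third contributes at most $\mathcal{M}[F_r]\|\boldsymbol{\theta}\|^2$ directly from the definition of the logarithmic Lipschitz constant. This yields
\[
\frac{d}{dt}\|\boldsymbol{\theta}\| \le \mathcal{M}[F_r]\,\|\boldsymbol{\theta}\| + \|\bD\|\left(\|(\bI-P)\nabla_\bu H(\bu)\| + \mathcal{L}[\nabla_\bu H]\,\|\boldsymbol{\rho}\|\right).
\]
Since the prescribed initial condition forces $\boldsymbol{\theta}(0)=0$, the integrating factor $e^{\mathcal{M}[F_r]t}$ produces a convolution bound for $\|\boldsymbol{\theta}(t)\|$; squaring, applying Cauchy--Schwarz in time, integrating over $[0,T]$, and substituting the two projection-error bounds from the previous step gives $\int_0^T\|\boldsymbol{\theta}\|^2\,dt \le C(T,\mu)\sum_{j=r+1}^d\lambda_j$. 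Adding back $\int_0^T\|\boldsymbol{\rho}\|^2\,dt$ completes the estimate.

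I expect the main obstacle to be the bookkeeping in the add-and-subtract decomposition of $\dot{\boldsymbol{\theta}}$: it must be arranged so that the single inhomogeneous term which cannot be absorbed into an estimate linear in $\|\boldsymbol{\theta}\|$ or $\|\boldsymbol{\rho}\|$ is exactly $(\bI-P)\nabla_\bu H(\bu)$, the quantity the gradient snapshots render controllable. A secondary subtlety is the use of the logarithmic Lipschitz constant $\mathcal{M}[F_r]$ rather than the ordinary Lipschitz constant in the exponent: this keeps $C(T,\mu)$ no worse than $e^{\mathcal{M}[F_r]T}$ and, in the dissipative regime where $\mathcal{M}[F_r]\le 0$, even yields a bound uniform in $T$, matching the behavior highlighted in \cite{chaturantabut2012state}.
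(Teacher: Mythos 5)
Your proposal is correct and follows essentially the same route as the paper's proof: the same $\rho$/$\theta$ splitting (yours in reduced coordinates, the paper's lifted by $\bPhi$, which is equivalent since $\bPhi$ has orthonormal columns), the same three-term add-and-subtract decomposition of $\dot{\theta}$, the same use of $\mathcal{L}[\nabla_\bu H]$ and the logarithmic Lipschitz constant of the reduced vector field, the same Gronwall/Cauchy--Schwarz step exploiting $\theta(0)=0$, and the same appeal to the combined snapshot identity \eqref{eq:poderr_h} to bound both $\int_0^T\|\rho\|^2\,dt$ and the gradient projection error by the eigenvalue tail (with the $\mu^{-2}$ factor). The only differences are cosmetic, e.g.\ your slightly looser constants $\|\bD\|$ in place of $\|\bPhi\bPhi^\top\bD\|$ and $\|\bPhi\bPhi^\top\bD\bPhi\bPhi^\top\|$.
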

\begin{proof}
Define the POD approximation error
\begin{equation}
\be= \bu-\bPhi {{\ba}} = \bu-\bPhi\bPhi^\top \bu+\bPhi\bPhi^\top \bu-\bPhi {{\ba}}= \rho+\theta,
\label{eq:e0}
\end{equation}
where $\rho= \bu-\bPhi\bPhi^\top \bu$ and $\theta= \bPhi\bPhi^\top \bu-\bPhi {\ba}$.
%
Note that
\begin{equation}
\frac{d}{dt}\|\theta\| = \frac{1}{2\|{\theta}\|}\frac{d}{dt}\|{\theta}\|^2= \frac{1}{\|{\theta}\|}\left<{\theta},\, \dot{{\theta}}\right>
\label{eq:d_theta}
\end{equation}
and
\begin{eqnarray}
\dot{\theta} &=& \bPhi \bPhi^\top \dot{\bu} - \bPhi \dot{{\ba}} \nonumber \\
		  &=& \bPhi \bPhi^\top \bD\, \nabla_{\bu} H(\bu)  - \bPhi \bD_r\, \nabla_{{\ba}} H(\bPhi {\ba}) \nonumber \\
		  &=& \bPhi \bPhi^\top \bD\, \nabla_{\bu} H(\bu)  - \bPhi \bPhi^\top \bD\bPhi \bPhi^\top\, \nabla_{\bu} H(\bu) \nonumber \\
		  &&+ \bPhi \bPhi^\top \bD\bPhi \bPhi^\top\, \nabla_{\bu} H(\bu) - \bPhi \bPhi^\top \bD\bPhi \bPhi^\top\, \nabla_{\bu} H(\bPhi \bPhi^\top\bu) \nonumber \\
		  &&+\bPhi \bPhi^\top \bD\bPhi \bPhi^\top\, \nabla_{\bu} H(\bPhi \bPhi^\top\bu)-\bPhi \bPhi^\top \bD  \bPhi \bPhi^\top\nabla_{\bu} H(\bPhi {\ba}).
\label{eq:d_theta00}		
\end{eqnarray}
Testing equation \eqref{eq:d_theta00} by $\theta$, we have, on the right-hand side of \eqref{eq:d_theta00},
\begin{equation}
\left<{\theta}, \bPhi \bPhi^\top \bD\, (I-\bPhi \bPhi^\top)\nabla_{\bu} H(\bu) \right>
 \leq \|\theta\|\, \|\bPhi \bPhi^\top \bD \|\, \|(I-\bPhi \bPhi^\top)\nabla_{\bu} H(\bu)\|,
 \label{eq:err_projH}
\end{equation}
\begin{equation}
\left<{\theta}, \bPhi \bPhi^\top \bD\bPhi \bPhi^\top\, (\nabla_{\bu} H(\bu) - \nabla_{\bu} H(\bPhi \bPhi^\top\bu)) \right>
\leq \|\theta\|\, \|\bPhi \bPhi^\top \bD \bPhi \bPhi^\top\|\, \mathcal{L}[\nabla_{\bu}H]\, \|(I-\bPhi \bPhi^\top)\bu\|,
 \label{eq:err_proju}
\end{equation}
\begin{equation}
\left<{\theta}, \bPhi \bPhi^\top \bD\bPhi \bPhi^\top\, (\nabla_{\bu} H(\bPhi \bPhi^\top\bu) - \nabla_{\bu} H(\bPhi {\ba})) \right>
\leq \mathcal{M}[ \bPhi \bPhi^\top \bD\bPhi \bPhi^\top\nabla_{\bu}H]\, \|\theta\|^2.
 \label{eq:err_MF}
\end{equation}
Let $C_1= \mathcal{M}[ \bPhi \bPhi^\top \bD\bPhi \bPhi^\top\nabla_{\bu}H]$,
$C_2= \|\bPhi \bPhi^\top \bD \bPhi \bPhi^\top\|\, \mathcal{L}[\nabla_{\bu}H]$,
and $C_3= \|\bPhi \bPhi^\top \bD \|$ and combine \eqref{eq:d_theta} with \eqref{eq:d_theta00}-\eqref{eq:err_MF}, we have
\begin{equation*}
\frac{d}{dt}\|\theta\| \leq C_1 \|\theta\| + C_2 \|\rho\| + C_3 \|\eta\|,
\label{eq:d_theta0}
\end{equation*}
where $\eta= \nabla_{\bu} H(\bu)-\bPhi\bPhi^\top \nabla_{\bu} H(\bu)$.
Applying Gronwall's lemma, for any $t\in [0, T]$, we get
\begin{equation*}
\|\theta(t)\| \leq \int_0^t e^{C_1 (t-\tau)} (C_2 \|\rho\| + C_3 \|\eta\|)\, d\tau,
\label{eq:d_theta1}
\end{equation*}
where we used the fact that $\theta(0)=0$.
Therefore, for any $t\in [0, T]$, we have
\begin{equation*}
\|\theta(t)\|^2 \leq \alpha(T)\left[C_2^2 \int_0^T \|\rho\|^2 dt+ C_3^2 \int_0^T \|\eta\|^2\, dt\right],
\label{eq:d_theta2}
\end{equation*}
where $\alpha(T)= 2\int_0^T e^{2C_1 (T-\tau)}\, d\tau$.
Hence,
\begin{equation}
\int_0^T\|\theta(t)\|^2\, dt \leq T \alpha(T)\left[C_2^2 \int_0^T \|\rho\|^2 dt+ C_3^2 \int_0^T \|\eta\|^2\, dt\right].
\label{eq:d_theta3}
\end{equation}
A combination with \eqref{eq:poderr_h} and \eqref{eq:e0} indicates that the POD approximation error is bounded by
$$
\int_0^T \|\be(t)\|^2\, dt \leq C(T, \mu)\sum\limits_{j>r} \lambda_j,
$$
where $C(T, \mu)= 1+T\alpha(T)(C_2^2+C_3^2/\mu^2)$.
\end{proof}

\begin{thm}
Let $\bu(t)$ be the solution of the FOM \eqref{eq: ham_fom} and ${\ba}(t)$ be the solution of the SP-ROM  \eqref{eq: ham_rom3} and \eqref{eq: Sr} with the initial condition ${\ba}(t_0)= {\bf 0}$, the POD approximation error satisfies
$$
\int_0^T \|\bu(t)-(\bu_0+\bPhi{\ba}(t))\|^2\, dt \leq C(T, \mu)\sum\limits_{j=r+1}^{d} \lambda_j,
$$
where $\lambda_j$ is the $j$-th leading eigenvalue of the snapshot covariance matrix associated to the shifted snapshots defined in \eqref{eq: snap_shift}.
\end{thm}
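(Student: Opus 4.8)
The plan is to mirror the proof of Theorem~\ref{thm: 1} almost verbatim, with the shift by $\bu_0$ absorbed into the error decomposition. First I would split the error as
$$
\be = \bu - (\bu_0 + \bPhi\ba) = \rho + \theta,
$$
where now $\rho = (I - \bPhi\bPhi^\top)(\bu - \bu_0)$ is the POD projection error of the \emph{shifted} state and $\theta = \bPhi\bPhi^\top(\bu - \bu_0) - \bPhi\ba$. Because $\bu(t_0) = \bu_0$ and $\ba(t_0) = \mathbf{0}$, we still have $\theta(t_0) = \mathbf{0}$, which is exactly what Gronwall's lemma will require. Note also that $\rho \perp \theta$, so $\|\be\|^2 = \|\rho\|^2 + \|\theta\|^2$.

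Next I would differentiate $\theta$. Since $\bu_0$ is constant, $\dot\theta = \bPhi\bPhi^\top\dot\bu - \bPhi\dot\ba$; substituting the FOM \eqref{eq: ham_fom}, the shifted SP-ROM \eqref{eq: ham_rom3}, the chain-rule identity $\nabla_\ba H(\bu_0 + \bPhi\ba) = \bPhi^\top\nabla_\bu H(\bu_0 + \bPhi\ba)$ (the shifted analogue of \eqref{eq: ham_grad}), and $\bD_r = \bPhi^\top\bD\bPhi$ gives
$$
\dot\theta = \bPhi\bPhi^\top\bD\,\nabla_\bu H(\bu) - \bPhi\bPhi^\top\bD\bPhi\bPhi^\top\,\nabla_\bu H(\bu_0 + \bPhi\ba).
$$
The key bookkeeping step is to telescope through the shifted projection $\widehat{\bu} = \bu_0 + \bPhi\bPhi^\top(\bu - \bu_0)$, for which the two identities $\bu - \widehat{\bu} = \rho$ and $\widehat{\bu} - (\bu_0 + \bPhi\ba) = \theta$ hold exactly. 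Inserting $\pm\,\bPhi\bPhi^\top\bD\bPhi\bPhi^\top\nabla_\bu H(\bu)$ and $\pm\,\bPhi\bPhi^\top\bD\bPhi\bPhi^\top\nabla_\bu H(\widehat{\bu})$ reproduces the same three right-hand-side pieces as in \eqref{eq:err_projH}--\eqref{eq:err_MF}.

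Testing against $\theta$ and estimating each piece would then proceed identically to Theorem~\ref{thm: 1}: the first piece is bounded by $\|\theta\|\,C_3\|\eta\|$ with $\eta = (I - \bPhi\bPhi^\top)\nabla_\bu H(\bu)$; the second by $\|\theta\|\,C_2\|\rho\|$, using $\mathcal{L}[\nabla_\bu H]$ together with $\|\bu - \widehat{\bu}\| = \|\rho\|$; and the third by $C_1\|\theta\|^2$ via the logarithmic Lipschitz constant $\mathcal{M}[\bPhi\bPhi^\top\bD\bPhi\bPhi^\top\nabla_\bu H]$, now legitimately because $\widehat{\bu} - (\bu_0 + \bPhi\ba) = \theta$ is precisely the difference of the two evaluation points. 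This yields $\frac{d}{dt}\|\theta\| \leq C_1\|\theta\| + C_2\|\rho\| + C_3\|\eta\|$ with the \emph{same} constants, so Gronwall's lemma followed by the Cauchy--Schwarz integral step reproduces the bound \eqref{eq:d_theta3} on $\int_0^T\|\theta\|^2\,dt$ word for word.

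The only genuinely new ingredient — and the one step I would check most carefully — is the projection-error identity that closes the estimate. For the shifted snapshots \eqref{eq: snap_shift}, Proposition~\ref{pr:poderr} gives
$$
\int_0^T\|(I - \bPhi\bPhi^\top)(\bu - \bu_0)\|^2\,dt + \mu^2\int_0^T\|(I - \bPhi\bPhi^\top)\nabla_\bu H(\bu)\|^2\,dt = \sum_{j=r+1}^d\lambda_j,
$$
that is, exactly $\int_0^T\|\rho\|^2\,dt + \mu^2\int_0^T\|\eta\|^2\,dt = \sum_{j>r}\lambda_j$, since here the state snapshots are shifted but the gradient snapshots are not. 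Combining this with $\|\be\|^2 = \|\rho\|^2 + \|\theta\|^2$ and the $\int\|\theta\|^2$ bound then produces the stated estimate with the same $C(T,\mu) = 1 + T\alpha(T)(C_2^2 + C_3^2/\mu^2)$. I do not anticipate any real obstacle: the argument is a mutatis mutandis repetition of Theorem~\ref{thm: 1}, the only care being the correct shifted decomposition, the verification that $\theta(t_0) = \mathbf{0}$, and the observation that the un-shifted gradient block of \eqref{eq: snap_shift} controls $\eta$ precisely as before.
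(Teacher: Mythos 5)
Your proposal is correct and takes essentially the same route as the paper: the paper's proof simply sets $\bw = \bu - \bu_0$, rewrites the FOM as $\dot{\bw} = \bD\,\nabla_{\bu}H(\bu_0+\bw)$ with $\bw(t_0)=0$, and observes that the error $\be = \bw - \bPhi\ba$ can then be analyzed exactly as in Theorem \ref{thm: 1}. Your $\rho$--$\theta$ decomposition, the check that $\theta(t_0)=\mathbf{0}$, and the matching of the shifted-state and unshifted-gradient blocks of \eqref{eq: snap_shift} to $\rho$ and $\eta$ are precisely the details that the paper's ``analyzed in the same manner'' phrase leaves to the reader.
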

\begin{proof}
Define $\bw= \bu-\bu_0$, we rewrite the FOM \eqref{eq: ham_fom} in terms of $\bw$ as follows.
\begin{equation}
\dot{\bw} = \bD\, \nabla_{\bu} H(\bu_0+\bw)
\label{eq: ham_fom_v}
\end{equation}
with initial condition $\bw(t_0)= 0$.
The approximation error of the SP-ROM \eqref{eq: ham_rom3} and \eqref{eq: Sr} is
\begin{equation}
\be= (\bu_0+\bw)-(\bu_0+\bPhi {{\ba}})= \bw-\bPhi {{\ba}},
\end{equation}
which can be analyzed in the same manner as that is done in Theorem \ref{thm: 1}.
\end{proof}

\section{Numerical experiments\label{sec: num}}
\noindent \indent In this section, we consider the following Hamiltonian PDEs: 1) the wave equation; 2) the Korteweg-de Vries (KdV) equation.
Both of them are of certain structures and have a constant energy, in particular, the first one has the symplectic structure.

\subsection{Wave equation\label{sec: we}}
\noindent \indent Consider the one-dimensional semilinear wave equation with a constant moving speed $c$ and a nonlinear forcing term $g(u)$,
\begin{equation*}
u_{tt}= c^2 u_{xx} - g(u), \quad 0\leq x\leq l.
\end{equation*}
The equation can be written in the Hamiltonian formulation, which has a symplectic structure, as follows:
\begin{equation}
\left[
\begin{array}{c}
\dot{u}\\
\dot{v}
\end{array}
\right] =
\left[
\begin{array}{cc}
0 & 1 \\
-1 & 0
\end{array}
\right]
\left[
\begin{array}{c}
\frac{\delta H}{\delta u}\\
\frac{\delta H}{\delta v}
\end{array}
\right],
\label{eq:lin_wave}
\end{equation}
where the Hamiltonian, also the system energy,
\begin{equation*}
H(u, v) = \int_0^l \left[\frac{1}{2}v^2+\frac{c^2}{2} u_x^2 + G(u)\right]\, dx,
\end{equation*}
with $G'(u) = g(u)$,
$\frac{\delta H}{\delta u} = -c^2 u_{xx}+g(u)$ and $\frac{\delta H}{\delta v} = v$.
After a spatial discretization with the number of degrees of freedom $n$, \eqref{eq:lin_wave} is rewritten into
\begin{equation}
\left[
\begin{array}{c}
\dot{\bu}\\
\dot{\bv}
\end{array}
\right] =
\left[
\begin{array}{cc}
0 & \bI_n \\
-\bI_n & 0
\end{array}
\right]
\left[
\begin{array}{c}
-\bA \bu+\mathcal{G}'(\bu)\\
\bv
\end{array}
\right],
\end{equation}
where $\bA$ is a discrete, scaled, one-dimensional second order differential operator and $\mathcal{G}({\bu})$ is the discretization of the nonlinear function $G({\bu})$.
The coefficient matrix is  skew-symmetric.
The problem has been tested by a proper symplectic decomposition method in \cite{peng2016symplectic}.
We will use the same example to test our structure preserving ROMs.

Consider only a linear case $g({\bu})=0$ since nonlinearity doesn't affect the structure of ROMs.
In particular, we use $c= 0.1$, $x\in [0, 1]$, $t\in [0, 50]$, and periodic boundary conditions.
The initial condition satisfies $u(0)=h(s(x))$ and $\dot{u}(0)=0$, where $h(s)$ is a cubic spline function defined by
\begin{equation*}
h(s) =
\left\{
\begin{array}{ll}
1-\frac{3}{2}s^2+\frac{3}{4}s^3 & \text{\quad if  \,\,} 0\leq s\leq 1, \\
\frac{1}{4}(2-s)^3 		       & \text{\quad if  \,\,} 1< s\leq 2, \\
0					       & \text{\quad if  \,\,} s> 2,
\end{array}
\right.
\end{equation*}
and $s(x)= 10|x-\frac{1}{2}|$.

In a full-order simulation, the spatial domain is partitioned into $n=500$ equal subintervals, thus the mesh size $\Delta x= 2\times 10^{-3}$.
We use the average vector field method (AVF) for the time integration with a time step $\Delta t= 0.01$.
A three-point stencil finite difference method is taken for the spatial discretization of the 1D Laplacian operator.
The fully discrete scheme solution at time $t_{k+1}$, $\bu_h^{k+1}$ and $\bv_h^{k+1}$, satisfies
\begin{equation}
\left[
\begin{array}{c}
\frac{\bu_h^{k+1}-\bu_h^k}{\Delta t}\\
\frac{\bv_h^{k+1}-\bv_h^k}{\Delta t}
\end{array}
\right] =
\left[
\begin{array}{cc}
0 &\bI_n \\
-\bI_n & 0
\end{array}
\right]
\left[
\begin{array}{c}
-\bA\frac{\bu_h^{k+1}+\bu_h^k}{2}\\
\frac{\bv_h^{k+1}+\bv_h^k}{2}
\end{array}
\right],
\end{equation}
where
\begin{equation*}
\bA= \frac{c^2}{\Delta x^2}
\left( \begin{array}{cccccc}
-2 & 1 & 0  & 0 & \cdots & 1 \\
1 & -2 & 1 & 0 & \cdots & 0 \\
   &     & \ddots & \ddots & \ddots &  \\
 0   &    \cdots&0          &  1 & -2  & 1 \\
 1 &    \cdots      &0     &  0  &  1 & -2
\end{array} \right),
\end{equation*}
the initial data $\bu_h^0$ has the $i$-th component equals $h(s(x_i))$ for $1\leq i\leq n$ and $\bv_h^0= {\bf 0}$.
The linear system is solved by the built-in direct solver of Matlab software. 
The time evolution of $u(x, t)$, $v(x, t)$ and energy $H(t)$ of the full-oder results are plotted in Figure \ref{Fig: lin_wave_full}.
It is seen that the energy is accurately preserved at t./Figures/he value $H= 7.5\times 10^{-2}$.
\begin{figure}[htb]
\centering
\begin{minipage}[ht]{0.31\linewidth}
\includegraphics[width=1\textwidth]{./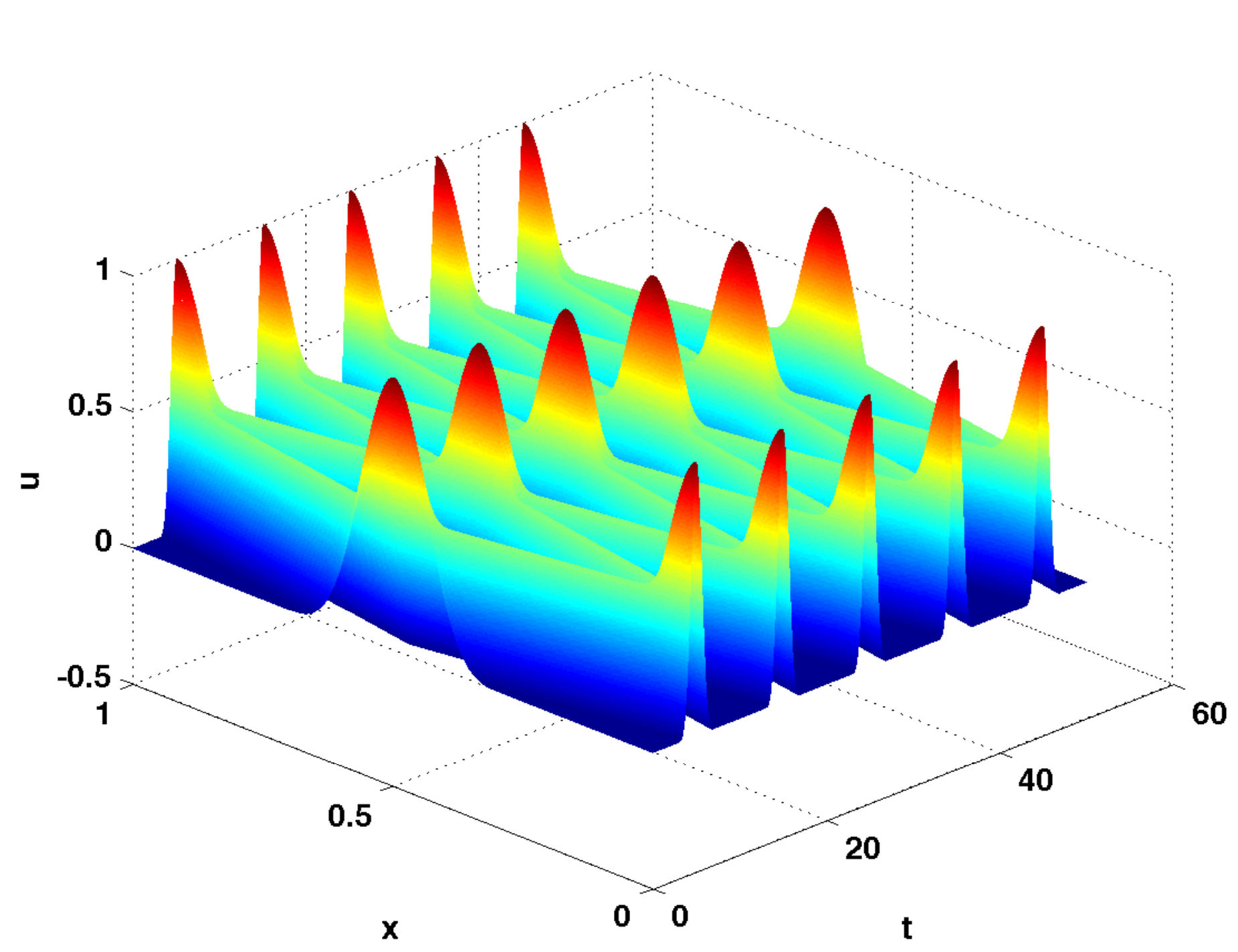}
\end{minipage}
\begin{minipage}[ht]{0.31\linewidth}
\includegraphics[width=1\textwidth]{./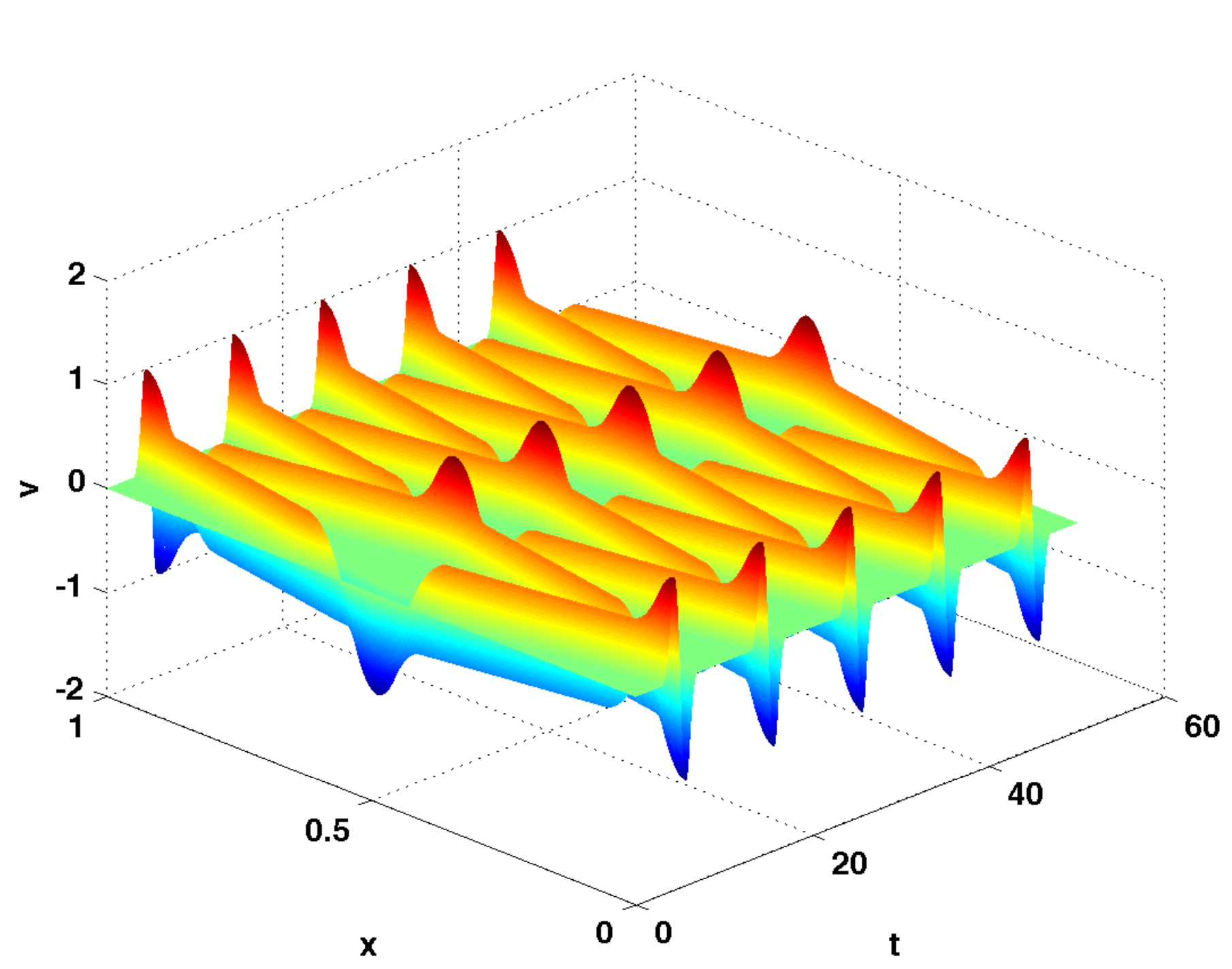}
\end{minipage}
\begin{minipage}[ht]{0.31\linewidth}
\includegraphics[width=1\textwidth]{./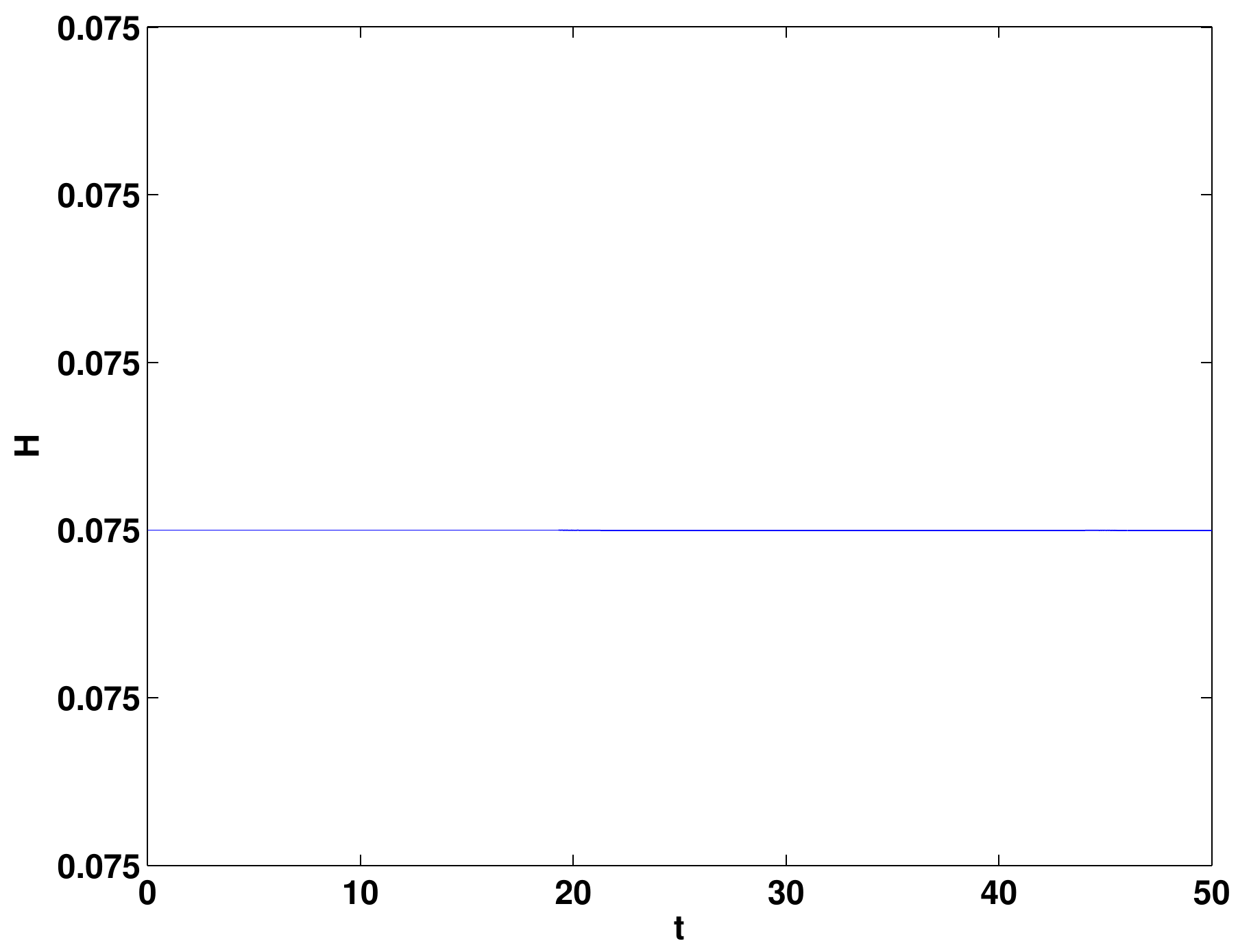}
\end{minipage}
\caption{
Full-order model simulation: time evolution of $u$ (left), time evolution of $v$ (middle), and energy evolution $H(t)$ (right).
}\label{Fig: lin_wave_full}
\end{figure}
Since the exact solution is unknown, in what follows, the full-order simulation results will be served as our benchmark solution.

Next, we investigate the numerical performance of four ROMs: i) standard POD-G ROM; ii) the SP-ROM with standard POD basis; iii) the SP-ROM with enriched POD basis; iv) the SP-ROM with POD basis of shifted snapshots.
The criteria we shall use include: the maximum approximation error over the entire spatial-temporal domain,
$\mathcal{E}_\infty= \max\limits_{k\geq 0} \max\limits_{0\leq i\leq n}  \sqrt{\left[(\bu_h^k)_i-(\bu_r^k)_i\right]^2+\left[(\bv_h^k)_i-(\bv_r^k)_i\right]^2}$,
and the energy value in the reduced-order simulation $H_r(t)$.

\subsubsection{Standard POD-G ROM.}
\noindent \indent Snapshots are collected from the full-order simulation every $50$ time steps.
Using the singular value decomposition, we find the $r$-dimensional POD basis $\bPhi_u$ and $\bPhi_v$ for functions $u$ and $v$, respectively.
The standard POD-G ROM is generated as follows by substituting the POD approximation $\bu_r(t) = \bPhi_u \ba(t)$ and $\bv_r(t) = \bPhi_v \bb(t)$ into the FOM and applying the Galerkin projection.
The model reads
\begin{equation}
\left[
\begin{array}{c}
\dot{\ba}\\
\dot{\bb}
\end{array}
\right] =
\left[
\begin{array}{c}
\bPhi_u^\top\bPhi_v \bb\\
 \bPhi_v^\top\bA \bPhi_u \ba
\end{array}
\right].
\label{eq:lin_wave_rom}
\end{equation}
Under the same time integration method as the FOM, we have the POD basis coefficient at $t_{k+1}$, $\ba^{k+1}$ and $\bb^{k+1}$, satisfying
\begin{equation}
\left[
\begin{array}{c}
\frac{\ba^{k+1}-\ba^{k}}{\Delta t}\\
\frac{\bb^{k+1}-\bb^{k}}{\Delta t}
\end{array}
\right] =
\left[
\begin{array}{c}
\bPhi_u^\top\bPhi_v \frac{\bb^{k+1}+\bb^{k}}{2}\\
\bPhi_v^\top\bA \bPhi_u \frac{\ba^{k+1}+\ba^{k}}{2}
\end{array}
\right]
\label{eq:lin_wave_rom_dis}
\end{equation}
with $\ba^0 = \bPhi_u^\top \bu_0$ and $\bb^0 = \bPhi_v^\top \bv_0$.

When $r=5$, the maximum error of the reduced-order simulation is $\mathcal{E}_\infty= 0.4591$.
In particular, it is observed that the Hamiltonian function varies with time as shown in Figure \ref{Fig: lin_wave_red20_stand} (left), which is an evidence of the standard POD-G ROM is not structure preserving.
Indeed, the value of Hamiltonian function at the final time is 17.43\% larger than the accurate value.

When the dimension increases to $r= 20$,
the POD-G ROM results are improved: maximum approximation error is $\mathcal{E}_\infty= 0.0208$;
the approximation error of the Hamiltonian function value decreases to $\mathcal{O}(10^{-6})$, but it still varies with time as shown in Figure \ref{Fig: lin_wave_red20_stand} (right).
\begin{figure}[htb]
\centering
\begin{minipage}[ht]{0.41\linewidth}
\includegraphics[width=1\textwidth]{./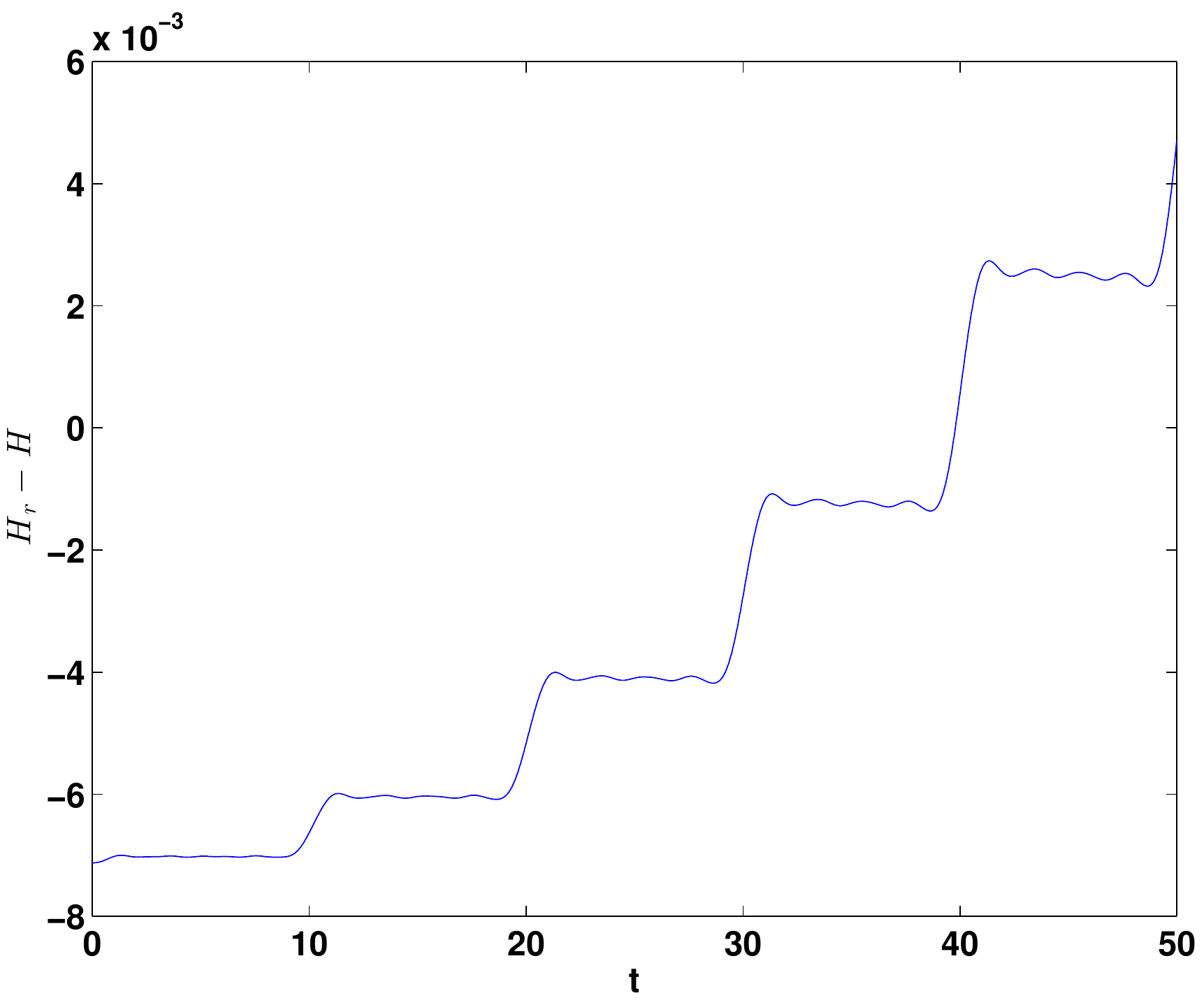}
\end{minipage}
\begin{minipage}[ht]{0.41\linewidth}
\includegraphics[width=1\textwidth]{./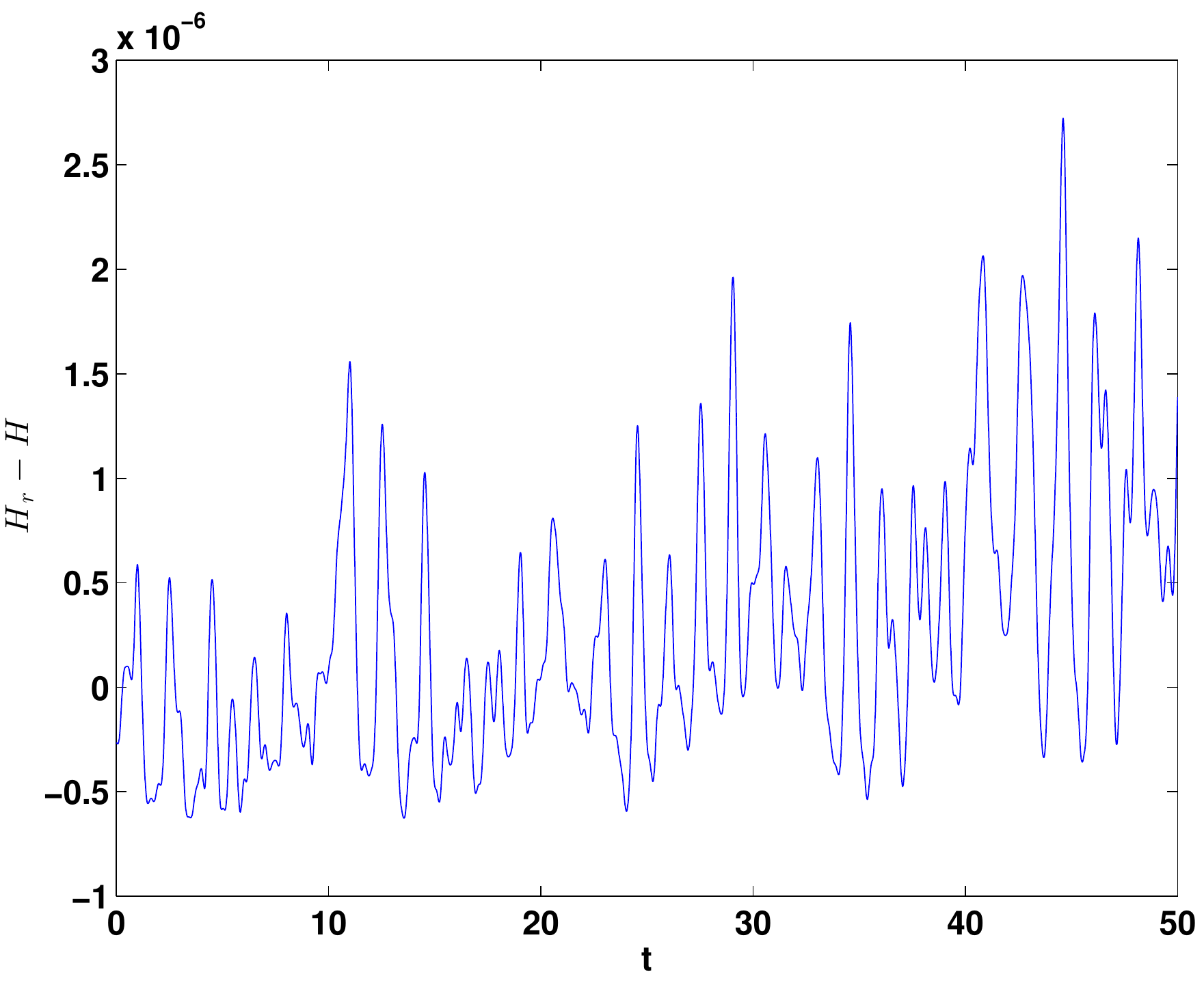}
\end{minipage}
\caption{
Time evolution of energy error in the standard POD-ROM simulations at $r=5$ (left) and $r=20$ (right).
}\label{Fig: lin_wave_red20_stand}
\end{figure}

\begin{remark}
For the same example, we didn't observe the blow up phenomena of the standard POD-G ROM solutions like what was obtained in \cite{peng2016symplectic}, even a smaller number of POD basis functions and a longer time interval were used in our test. One possible reason for the good performance is that we generated POD basis functions for $u$ and $v$ separately, not together. This ensures appropriate information was captured by each basis.
\end{remark}

\subsubsection{SP-ROMs with standard POD basis.}
\noindent \indent Based on the method proposed in Section \ref{sec: alg}, we construct a new structure-preserving ROM using the standard POD basis (SP-ROM-0):
\begin{equation}
\left[
\begin{array}{c}
\dot{\ba}\\
\dot{\bb}
\end{array}
\right] =
\left[
\begin{array}{cc}
0 & \bPhi_u^\top \bPhi_v \\
-\bPhi_v^\top \bPhi_u & 0
\end{array}
\right]
\left[
\begin{array}{c}
-\bPhi_u^\top \bA \bPhi_u \ba\\
\bb
\end{array}
\right].
\end{equation}
The coefficient matrix in the new ROM is skew-symmetric, which has the same structure as that of  the FOM.
Thus, we expect a constant Hamiltonian function approximation in the reduced-order simulation by using the AVF scheme.
The discretized equation system reads
\begin{equation}
\left[
\begin{array}{c}
\frac{\ba^{k+1}-\ba^{k}}{\Delta t}\\
\frac{\bb^{k+1}-\bb^{k}}{\Delta t}
\end{array}
\right] =
\left[
\begin{array}{cc}
0 & \bPhi_u^\top \bPhi_v \\
-\bPhi_v^\top \bPhi_u & 0
\end{array}
\right]
\left[
\begin{array}{c}
-\bPhi_u^\top \bA \bPhi_u \frac{\ba^{k+1}+\ba^k}{2}\\
\frac{\bb^{k+1}+\bb^k}{2}
\end{array}
\right]
\end{equation}
with $\ba^0 = \bPhi_u^\top \bu_0$ and $\bb^0 = \bPhi_v^\top \bv_0$.

Motivated by our error analysis, we introduce the weighted gradient of Hamiltonian function into the snapshots \eqref{eq: snap}.
We first investigate the effect of weight $\mu$ on the numerical performance of SP-ROM-0 by varying its value in the range $0\leq \mu\leq 1$ (note that $\mu$ can be any positive constant, but a larger $\mu$ would make POD basis functions capture more information from the gradient of Hamiltonian function than the state variable).

Figure \ref{Fig: lin_wave_mu} shows the trend of maximum error $\mathcal{E}_\infty$ versus $\mu$ for the cases $r=5$ and $r=20$, respectively.
It is found that, when $r=5$, the minimum $\mathcal{E}_\infty$ is achieved at $\mu= 0.0800$ with the associated error $0.2480$;
the value of $\mathcal{E}_\infty$ increases slightly to $0.2606$ at $\mu=0$.
When $r=20$, the minimum $\mathcal{E}_\infty$ is obtained at $\mu= 0.0020$ with the related error $0.0051$;
it increases slightly to $0.0058$ at $\mu=0$.

\begin{figure}[htb]
\centering
\begin{minipage}[ht]{0.38\linewidth}
\includegraphics[width=1\textwidth]{./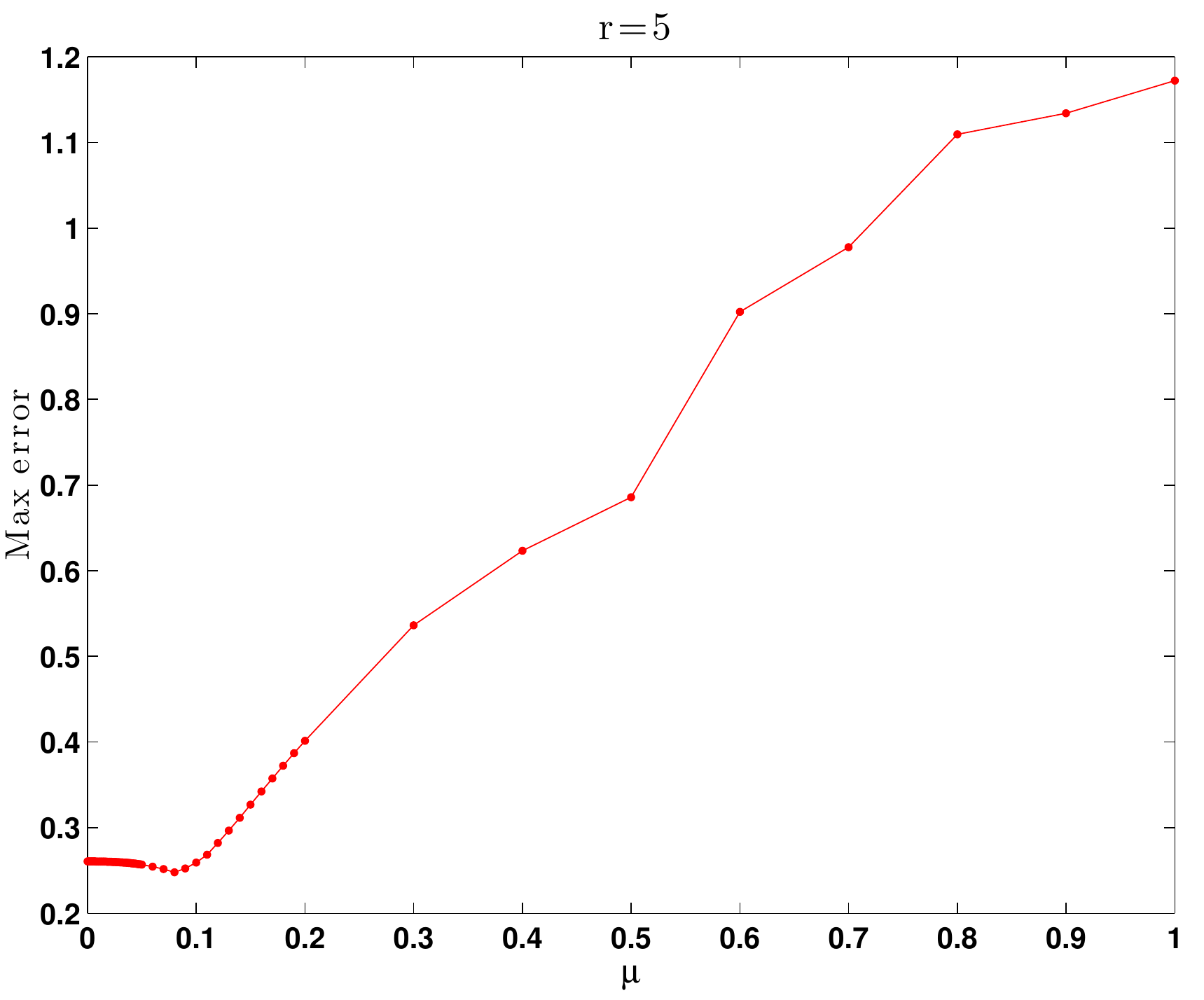}
\end{minipage}
\hspace{1cm}
\begin{minipage}[ht]{0.38\linewidth}
\includegraphics[width=1\textwidth]{./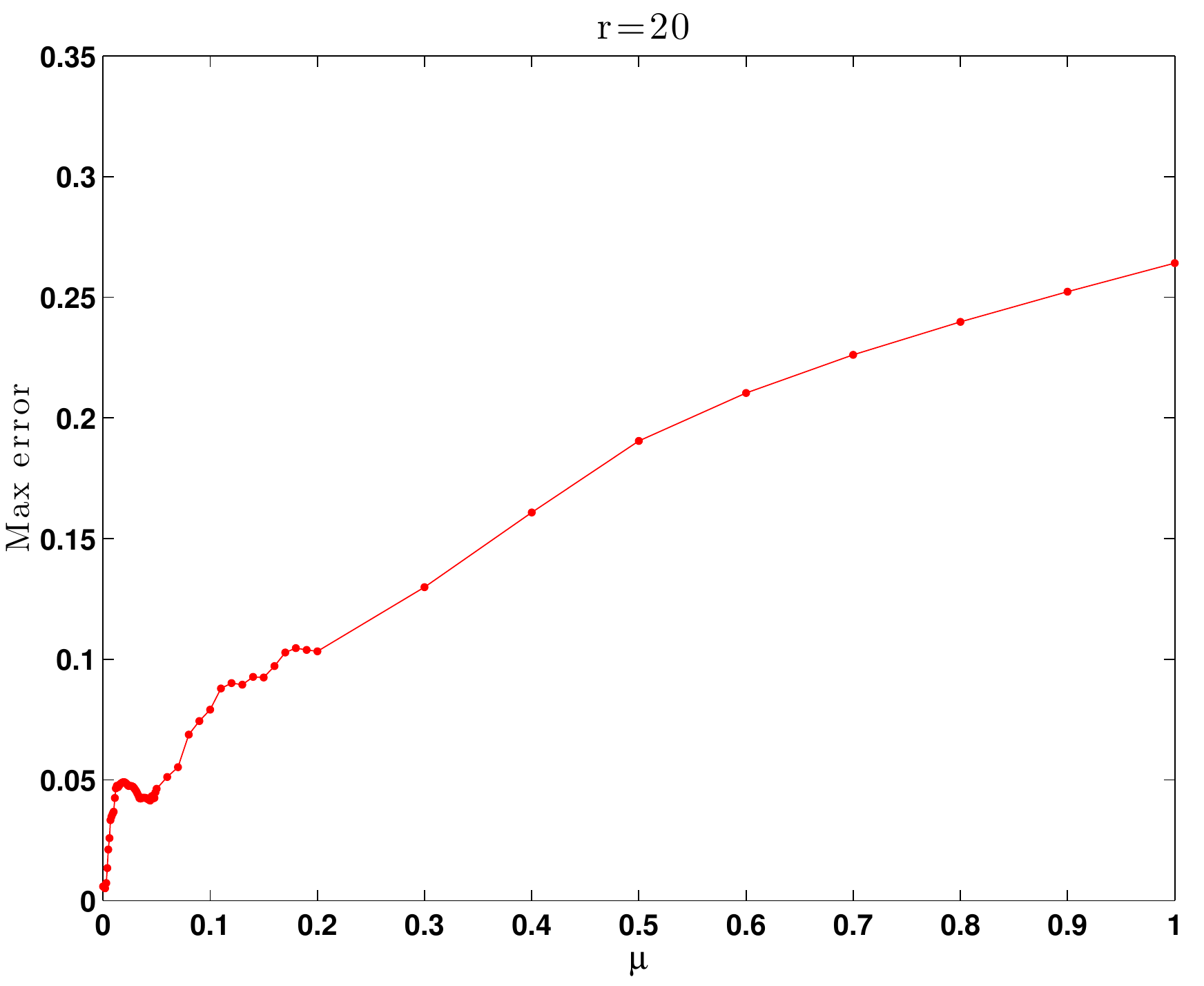}
\end{minipage}
\caption{
Maximum errors of the SP-ROM-0 approximation versus the values of $\mu$: $r=5$ (left) and $r=20$ (right).
}\label{Fig: lin_wave_mu}
\end{figure}

Based on the above observations, we conclude that: (i) when $\mu=0$, the SP-ROM-0 performs well; (ii) by choosing an optimal $\mu$, the accuracy of the SP-ROM-0 model is improved.
However,
the price one pays for finding the optimal $\mu$ is multiple runs of the ROMs, involving both offline and online processes, in order to tune this free parameter.
Since the SP-ROM yields a good approximation when $\mu=0$, we will only consider the $\mu= 0$ case in what follows.

When $\mu= 0$, that is, the snapshot set only contains the selected time samples of $\bu_h(t)$.
The Hamiltonian of the $5$-dimensional SP-ROM-0, $H_r(t)$, is a constant as shown in Figure \ref{Fig: lin_wave_red5} (left).
However, there exists a discrepancy $H_r(t)-H(t)= -7.1245\times 10^{-3}$.
As the dimension is increased to $r=20$, the reduced-order simulation is improved as expected.
The global error decreases from $0.2606$ to $0.0058$, and the Hamiltonian is close to that of the FOM.
Indeed, the energy discrepancy shrinks to be $H_r(t)-H(t)= -2.6563\times 10^{-7}$ as shown in Figure \ref{Fig: lin_wave_red5} (right).
\begin{figure}[htb]
\centering
\begin{minipage}[ht]{0.41\linewidth}
\includegraphics[width=1\textwidth]{./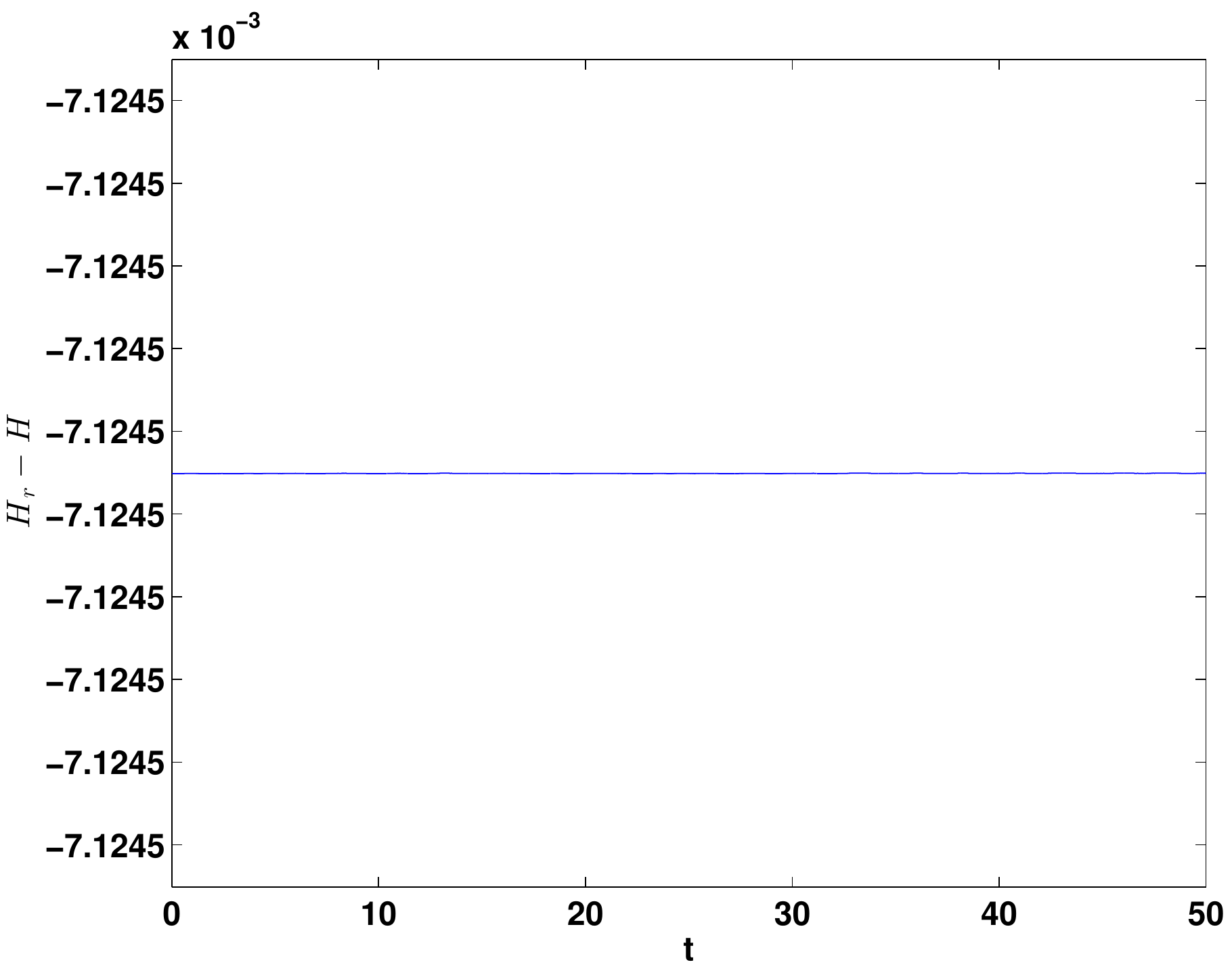}
\end{minipage}
\hspace{1cm}
\begin{minipage}[ht]{0.41\linewidth}
\includegraphics[width=1\textwidth]{./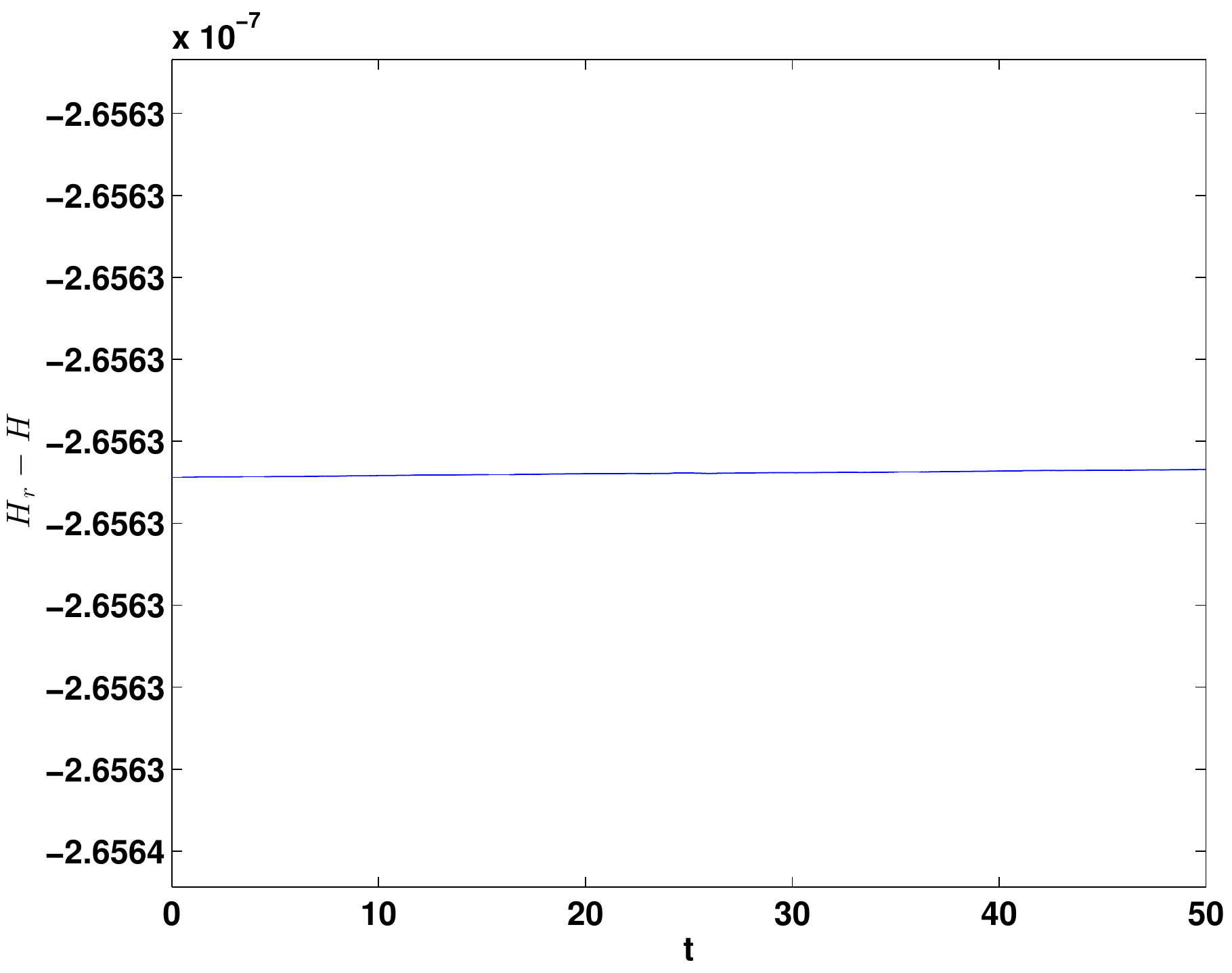}
\end{minipage}
\caption{
Time evolution of energy error of SP-ROM-0 at $\mu=0$: $r=5$ (left) and $r=20$ (right).
}\label{Fig: lin_wave_red5}
\end{figure}

In practice, it is more useful to improve the performance of low-dimensional ROMs.
In the next two subsections, we will focus on the 5-dimensional structure preserving ROMs $(r= 5)$, $\mu=0$ in the snapshots, and compare two ways for improving the Hamiltonian approximation.

\subsubsection{SP-ROMs with corrected Hamiltonian}

 \paragraph{Approach I. SP-ROMs with enriched POD basis}
We introduce into the 5-dimensional POD basis set an extra basis function generated from the residual of initial data, that is, $\widetilde{\bPhi}_u= [\bPhi_u, \bpsi_u]$ and $\widetilde{\bPhi}_v= [\bPhi_v, \bpsi_v]$.
The discrete structure preserving ROM with the enriched POD basis (SP-ROM-1) is the following:
\begin{equation}
\left[
\begin{array}{c}
\frac{\ba^{k+1}-\ba^{k}}{\Delta t}\\
\frac{\bb^{k+1}-\bb^{k}}{\Delta t}
\end{array}
\right] =
\left[
\begin{array}{cc}
0 & \widetilde{\bPhi}_u^\top \widetilde{\bPhi}_v \\
-\widetilde{\bPhi}_v^\top \widetilde{\bPhi}_u & 0
\end{array}
\right]
\left[
\begin{array}{c}
-\widetilde{\bPhi}_u^\top \bA \widetilde{\bPhi}_u \frac{\ba^{k+1}+\ba^k}{2}\\
\frac{\bb^{k+1}+\bb^k}{2}
\end{array}
\right]
\end{equation}
with $\ba^0 = \widetilde{\bPhi}_u^\top \bu_0$ and $\bb^0 = \widetilde{\bPhi}_v^\top \bv_0$.
Since the SP-ROM \eqref{eq: ham_rom2}-\eqref{eq: Sr} keeps the Hamiltonian function invariant, it is expected that the energy approximation in the SP-ROM-1 simulations becomes more accurate after introducing the extra basis function.

Consider the snapshots at $\mu= 0$, that is, the snapshot set \eqref{eq: snap} only contains the selected time samples of $\bu_h(t)$.
It is found that there is no any visible discrepancy in the energy approximation between the ROM and FOM.
As shown in Figure \ref{Fig: lin_wave_icenrich5} (left), the magnitude of the discrepancy is $\mathcal{O}(10^{-14})$.
However, the maximum error increases to $\mathcal{E}_\infty= 0.4138$, which is more than 1.5 times of that in the 5-dimensional SP-ROM-0.
\begin{figure}[htb]
\centering
\begin{minipage}[ht]{0.41\linewidth}
\includegraphics[width=1\textwidth]{./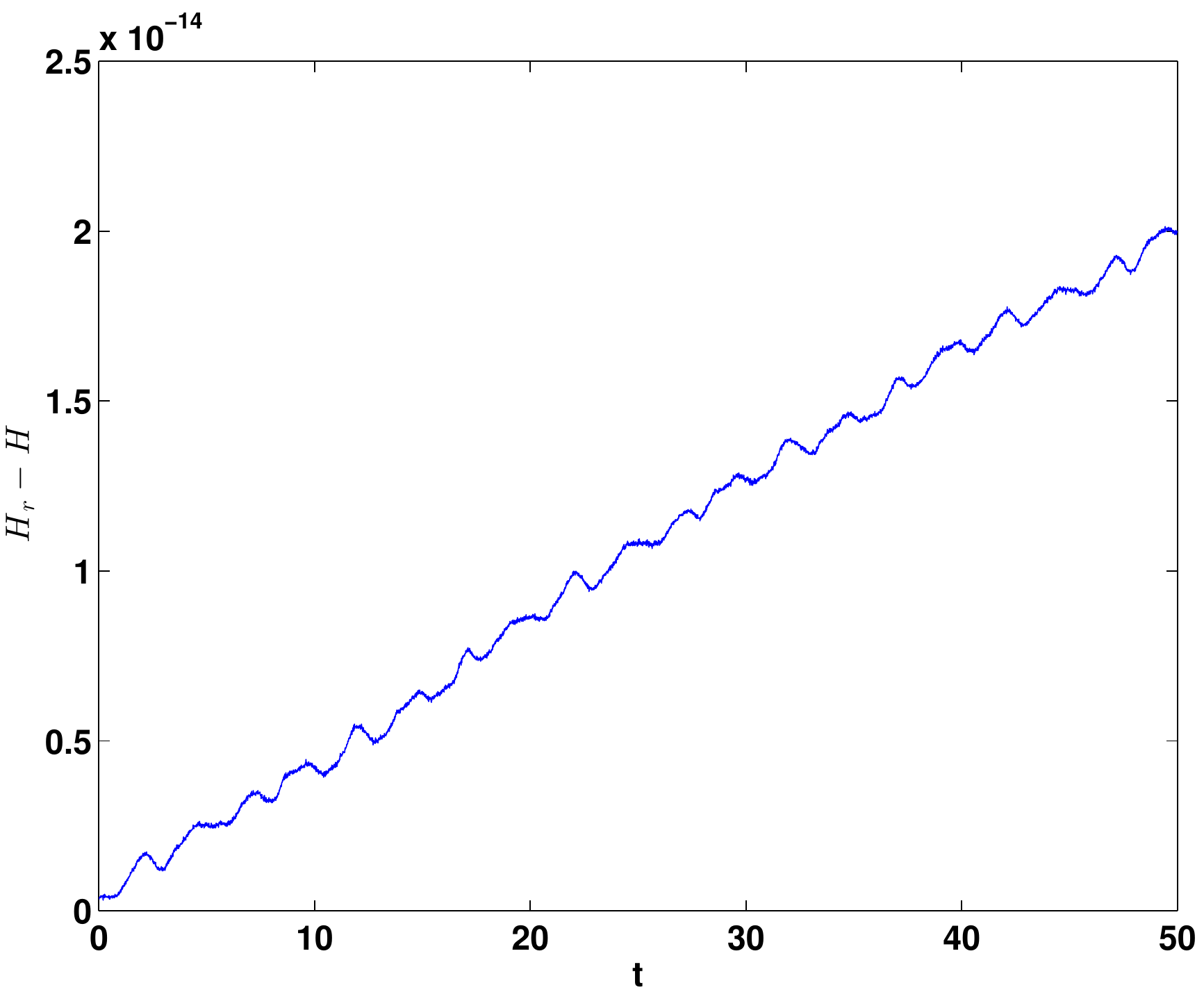}
\end{minipage}
\hspace{1cm}
\begin{minipage}[ht]{0.41\linewidth}
\includegraphics[width=1\textwidth]{./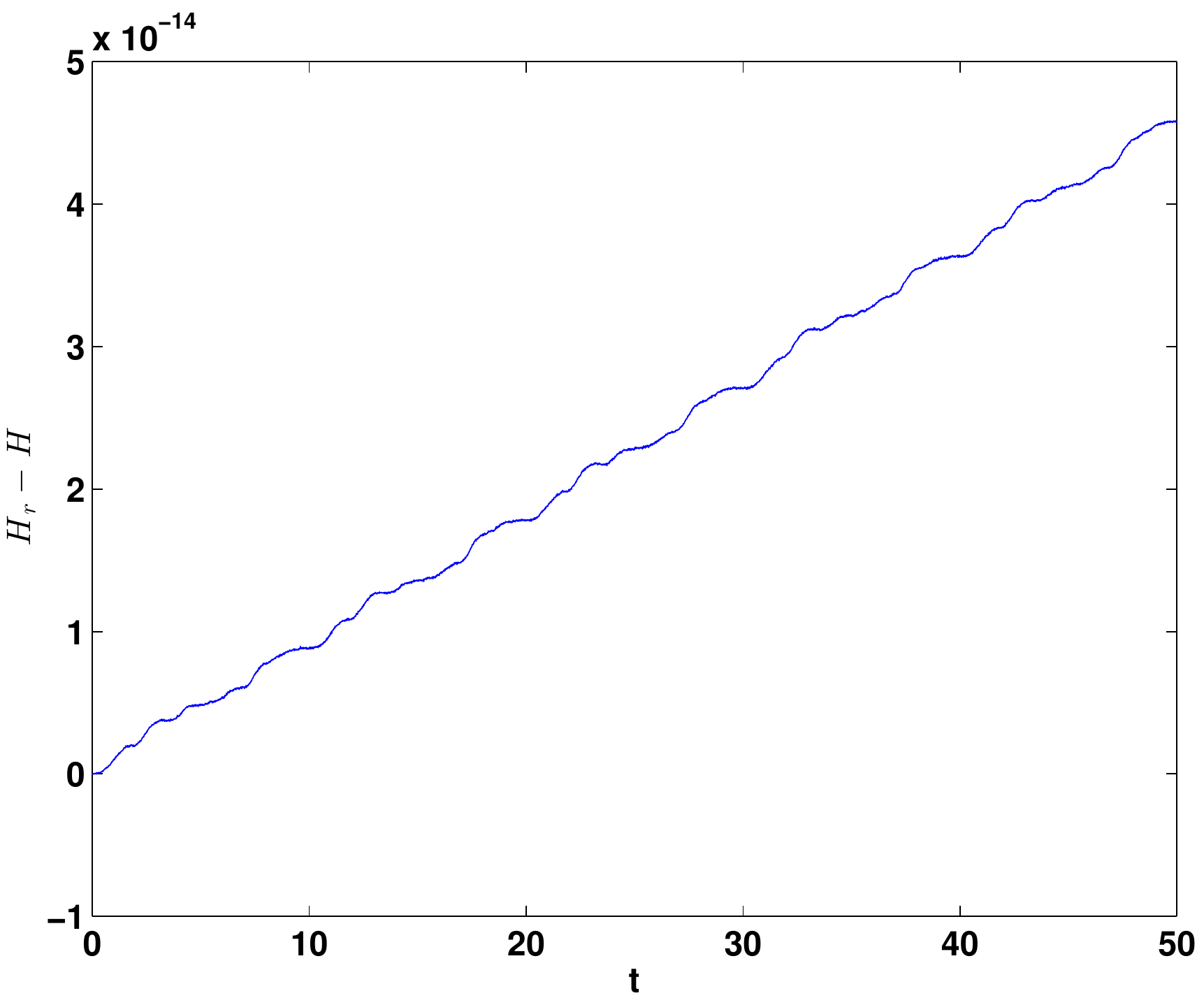}
\end{minipage}
\caption{
Time evolution of energy error in the 6-dimensional SP-ROM-1 (left) and 5-dimensional SP-ROM-2 (right) when $\mu=0$. Note that the magnitude of $H_r(t)-H(t)$ is $\mathcal{O}(10^{-14})$, thus the energy is accurately captured in both models.
}\label{Fig: lin_wave_icenrich5}
\end{figure}

\paragraph{Approach II. SP-ROMs with POD basis of shifted snapshots}  In this case, the POD basis is generated from shifted snapshots \eqref{eq: snap_shift} with the choice of $\mu= 0$.
With the use of the structure-preserving ROM \eqref{eq: ham_rom3} and \eqref{eq: Sr}, the SP-ROM-2 model reads
\begin{equation}
\left[
\begin{array}{c}
\frac{\ba^{k+1}-\ba^{k}}{\Delta t}\\
\frac{\bb^{k+1}-\bb^{k}}{\Delta t}
\end{array}
\right] =
\left[
\begin{array}{cc}
0 & \bPhi_u^\top \bPhi_v \\
-\bPhi_v^\top \bPhi_u & 0
\end{array}
\right]
\left(
\left[
\begin{array}{c}
-\bPhi_u^\top \bA \bPhi_u \frac{\ba^{k+1}+\ba^k}{2}\\
\frac{\bb^{k+1}+\bb^k}{2}
\end{array}
\right]
+
\left[
\begin{array}{c}
-\bPhi_u^\top \bA \bu_0\\
\bPhi_v^\top\bv_0
\end{array}
\right]
\right)
\end{equation}
with $\ba^0 = {\bf 0}$ and $\bb^0 = {\bf 0}$.

The time evolution of the Hamiltonian function error, $H_r(t)-H(t)$, is shown in Figure \ref{Fig: lin_wave_icenrich5} (right).
It is observed that the magnitude of the discrepancy is $\mathcal{O}(10^{-14})$, thus the Hamiltonian function approximation in the SP-ROM-2 is more accurate.
Furthermore, the maximum error $\mathcal{E}_\infty= 0.1526$, which is about 59\% of the 5-dimensional SP-ROM-0 approximation error.

\subsection{Korteweg-de Vries (KdV) equation\label{sec: kdv}}
\noindent \indent The KdV equation
\begin{equation*}
u_t = \alpha uu_x + \rho u_x + \gamma u_{xxx},
\end{equation*}
defined in the spatial temporal domain $[-L, L]\times [0, T]$, has a bi-Hamiltonian form (see, e.g., \cite{Karasozen2013energy}).
Here, we consider the first Hamiltonian formulation
\begin{equation}
u_t = \mathcal{S} \frac{\delta \mathcal{H}}{\delta u},
\end{equation}
where $\mathcal{S}= \partial_x$ denotes the first-order derivative with respect to space and the Hamiltonian function, which is the system energy,
$$\mathcal{H}=\int_0^L \left(\frac{\alpha}{6}u^3+\frac{\rho}{2}u^2-\frac{\nu}{2}u_x^2 \right) \, dx.$$
The other Hamiltonian formulation can be treated in the same manner.

Consider a problem associated with the periodic boundary conditions $u(-L, t)= u(L, t)$ for $t\in [0, T]$ and initial condition $u(x, 0)=u_0(x)$. In the full-order simulation, the domain $[-L, L]$ is divided into $n$ uniform subdomains with the interior grid points $x_i= i\Delta x$ for $i= 1, \ldots, n$ and $\Delta x= \frac{2L}{n}$.
Let $\bA$ and $\bB$ be the matrices associated to the discretization of the skew adjoint operator $\mathcal{S}$ and the second-order derivative by central differences, respectively, i.e.,
\begin{equation}
\bA= \frac{1}{2\Delta x}
\left( \begin{array}{cccccc}
0 & 1 & 0  & 0 & \cdots & -1 \\
-1 & 0 & 1 & 0 & \cdots & 0 \\
   &     & \ddots & \ddots & \ddots &  \\
 0   &    \cdots&0          &  -1 & 0  & 1 \\
 1 &    \cdots      &0     &  0  &  -1 & 0
\end{array} \right),
\qquad
\bB= \frac{1}{\Delta x^2}
\left( \begin{array}{cccccc}
-2 & 1 & 0  & 0 & \cdots & 1 \\
1 & -2 & 1 & 0 & \cdots & 0 \\
   &     & \ddots & \ddots & \ddots &  \\
 0   &    \cdots&0          &  1 & -2  & 1 \\
 1 &    \cdots      &0     &  0  &  1 & -2
\end{array} \right)
\end{equation}
and $\bu= (u_1, \ldots, u_n)^\top$,
the semi-discrete KdV equation can be written in a vector form
\begin{eqnarray}
\frac{d\bu}{dt} &=& \bA \nabla_{\bu}H(\bu) \nonumber \\
		      &=& \bA\left(\frac{\alpha}{2}\bu^2+\rho\bu+\nu\bB\bu\right),
\label{eq: kdv_fom1}
\end{eqnarray}
where $H(\bu)= \sum_{j=1}^n\left[\frac{\alpha}{6}u_j^3 + \frac{\rho}{2}u_j^2 - \frac{\nu}{2}(\delta_x^+ u_j)^2\right]$ and $\delta_x^+ u_j$ is the forward finite differencing. The discrete energy $H\Delta x$ approximates to $\mathcal{H}$ as $\Delta x$ goes to zero. Since $\bA$ is skew-symmetric, this dynamical system conserves the discrete energy $H\Delta x$.

In the rest of this subsection, we consider the test example, in which parameters $\alpha= -6$, $\rho= 0$, $\nu= -1$, $L= 20$ and $T=20$.
The initial condition $u_0(x)= {\sech}^2\left(\frac{x}{\sqrt{2}}\right)$.
Mesh sizes are chosen as $\Delta x= \Delta t= 0.02$ in the FOM simulation.
For the time integration, we use the method of AVF.
At time $t_k$, the solution $\bu_h^k$ satisfies,
\begin{equation}
\frac{\bu_h^{k+1}-\bu_h^k}{\Delta t} = \bA\left[\frac{\alpha}{6}\left( (\bu_h^k)^2+\bu_h^k\bu_h^{k+1}+(\bu_h^{k+1})^2 \right) + \rho \bu_h^{k+\frac{1}{2}} + \nu\bB\bu_h^{k+\frac{1}{2}}\right],
\label{eq: kdv_fom}
\end{equation}
where $\bu_h^{k+\frac{1}{2}}= \left(\bu_h^k+\bu_h^{k+1}\right)/2$
and the initial data $\bu_h^0$ has the $i$-th component to be $u_0(x_i)$ for $1\leq i\leq n$.
Due to the nonlinearity of the model, iterative methods such as the Picard's method is used when solving the system.
The evolution of the full-order simulation $u_h(x, t)$ and energy $\mathcal{H}(t)\approx -1.1317$ is shown in Figure \ref{Fig: kdv_fom}.
\begin{figure}[htb]
\centering
\begin{minipage}[ht]{0.38\linewidth}
\includegraphics[width=1\textwidth]{./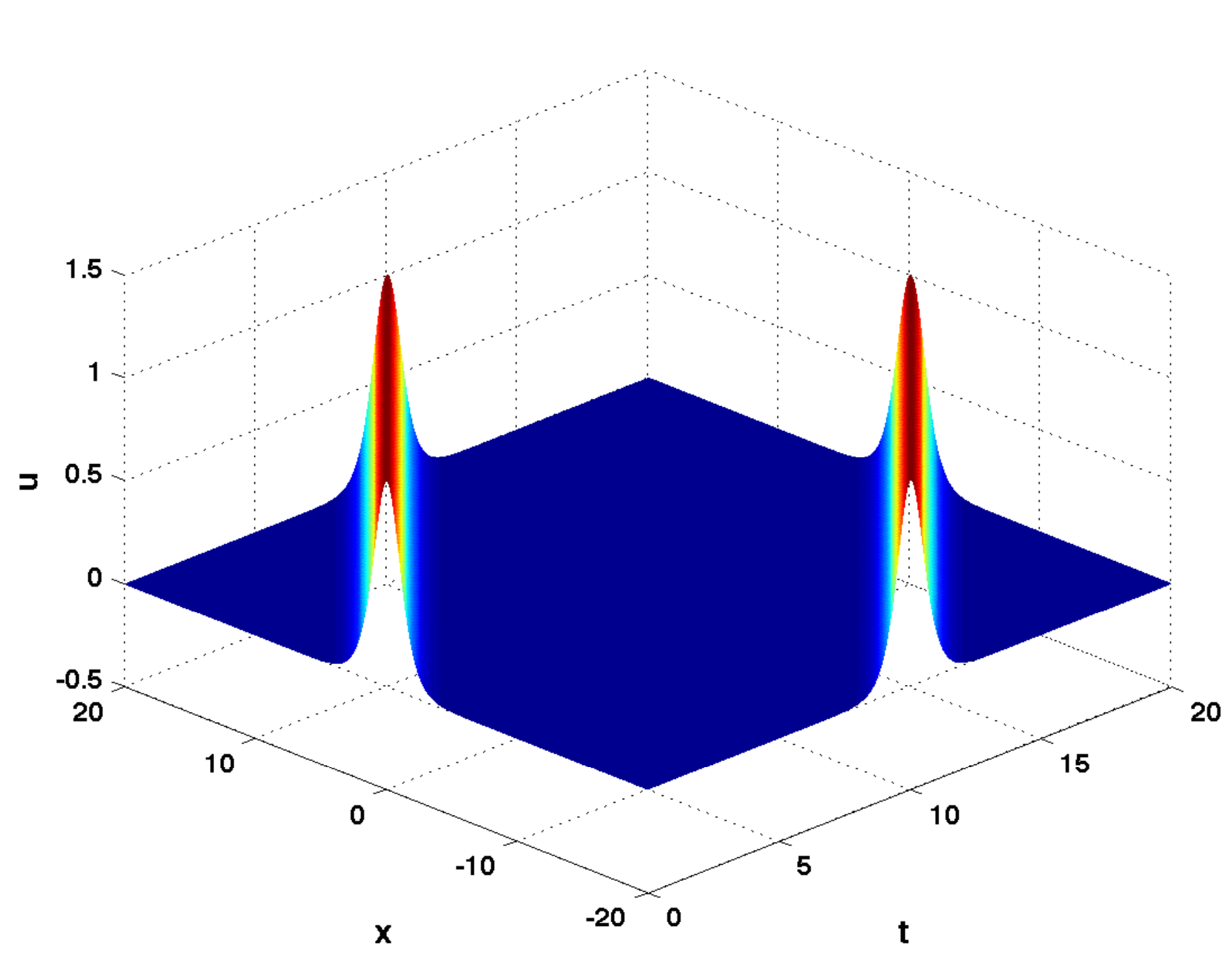}
\end{minipage}
\hspace{1cm}
\begin{minipage}[ht]{0.38\linewidth}
\includegraphics[width=1\textwidth]{./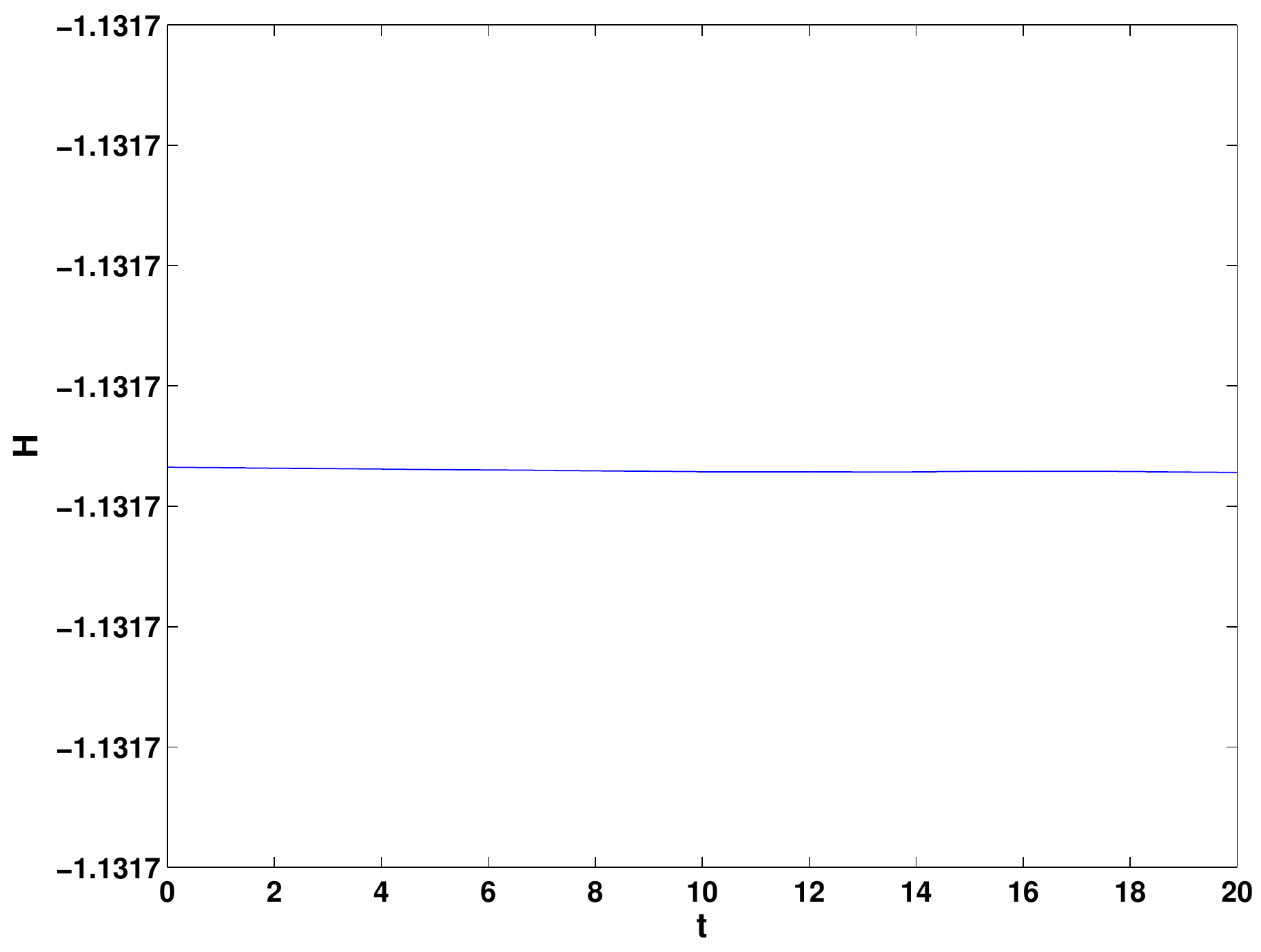}
\end{minipage}
\caption{The full-order state solution $u$ (left) and energy $\mathcal{H}(t)$ (right).
}\label{Fig: kdv_fom}
\end{figure}

Next, we investigate the numerical behavior of ROMs.
Since the exact solution is unknown, we regard the full-order results as a benchmark and compare the accuracy of standard POD-G ROMs and the proposed structure-preserving ROMs by measuring the error $\mathcal{E}_\infty= \max\limits_{k>0} \max\limits_{0\leq i\leq n} |(\bu_h^k)_i-(\bu_r^k)_i|$
and the energy value in the reduced-order simulations, $\mathcal{H}_r(t)$.

\subsubsection{Standard POD-G ROMs}
\noindent \indent Considering the snapshots collected from the full-order simulation every $5$ time steps, we generate
the $r$-dimensional POD basis matrix $\bPhi$. Replacing $\bu$ by its reduced-order approximation $\bu_r=\bPhi\ba$ in \eqref{eq: kdv_fom1}, multiplying $\bPhi^\top$ on both sides of the equation and using the fact that $\bPhi^\top\bPhi= I_r$, we have
\begin{equation}
\frac{d\ba}{dt} = \bPhi^\top \bA \nabla_{\bu}H(\bPhi \ba).
\label{eq: kdv_rom1}
\end{equation}
Using the same AVF scheme, the discrete POD-G ROM reads
\begin{equation}
\frac{\ba^{k+1}-\ba^k}{\Delta t} =
\bPhi^\top\bA\left[\frac{\alpha}{6}\left( (\bPhi\ba^k)^2
+(\bPhi\ba^k)(\bPhi\ba^{k+1})
+(\bPhi\ba^{k+1})^2 \right)
+ \rho \bPhi \frac{\ba^k+\ba^{k+1}}{2}
+ \nu\bB\bPhi \frac{\ba^k+\ba^{k+1}}{2}\right]
\label{eq: kdv_rom_e1}
\end{equation}
with the initial condition $\ba^0= \bPhi^\top \bu^0$.
This model is also nonlinear, thus the Picard's iteration is used in simulating the system.
Since the nonlinear is quadratic, tensor manipulation can be used for efficiently evaluating the nonlinear term (see, e.g., in \cite{wang2011closure}).

When $r= 40$, the maximum error $\mathcal{E}_\infty= 2.9637\times 10^{-2}$.
The time evolution of energy is shown in Figure \ref{Fig: kdv_rom_r40} (left), which indicates the energy is not exactly conserved.
It is because the coefficient matrix of the ROM is not skew-symmetric, thus the Hamiltonian structure is damaged.
\begin{figure}[htb]
\centering
\begin{minipage}[ht]{0.41\linewidth}
\includegraphics[width=1\textwidth]{./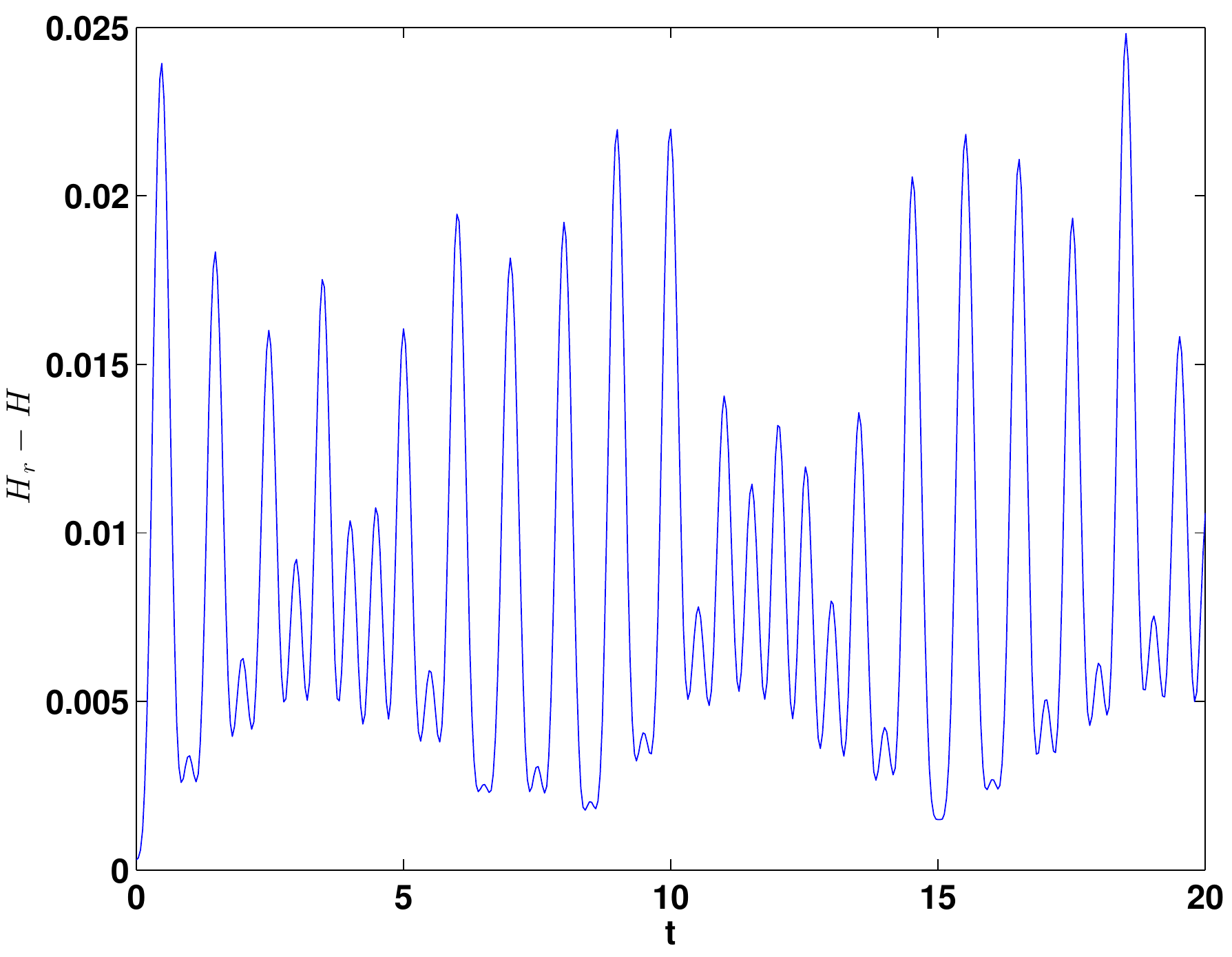}
\end{minipage}
\hspace{1cm}
\begin{minipage}[ht]{0.41\linewidth}
\includegraphics[width=1\textwidth]{./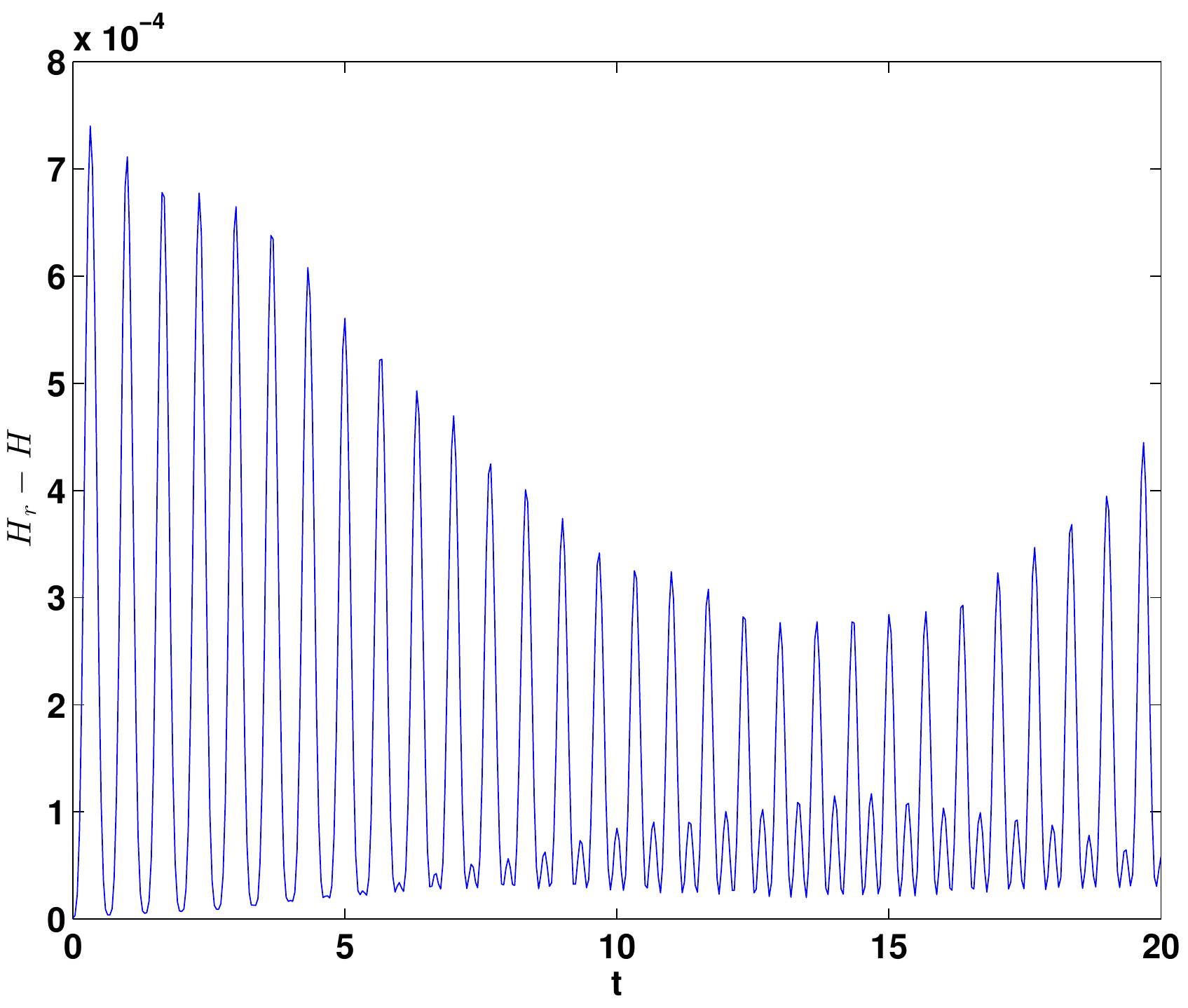}
\end{minipage}
\caption{Time evolution of energy error in the standard reduced-order approximation: $r=40$ (left) and $r=60$ (right).
}\label{Fig: kdv_rom_r40}
\end{figure}

When the dimension is increased to $r=60$, the maximum error is reduced to $\mathcal{E}_\infty= 2.4476\times 10^{-3}$, and the energy discrepancy decreases to $\mathcal{O}(10^{-4})$, but the value of the energy approximation still varies with time as shown in Figure \ref{Fig: kdv_rom_r40} (right).

In the rest of this test, we will focus on the low dimensional case and aims to improve the accuracy of the SP-ROMs with $r= 40$.

\subsubsection{SP-ROMs with standard POD basis}
\noindent \indent The proposed structure-preserving ROMs possess a skew-symmetric coefficient matrix $A_r= \bPhi^\top \bA \bPhi$, and the dynamical system reads
\begin{equation}
\frac{d\ba}{dt} = \bA_r \nabla_{\ba}H(\bPhi \ba).
\label{eq: kdv_rom2}
\end{equation}
The structure-preserving ROM with standard POD basis (SP-ROM-0) after the discretization reads
\begin{equation}
\frac{\ba^{k+1}-\ba^k}{\Delta t} =
\bA_r\bPhi^\top\left[\frac{\alpha}{6}\left( (\bPhi\ba^k)^2
+(\bPhi\ba^k)(\bPhi\ba^{k+1})
+(\bPhi\ba^{k+1})^2 \right)
+ \rho \bPhi \frac{\ba^k+\ba^{k+1}}{2}
+ \nu\bB\bPhi \frac{\ba^k+\ba^{k+1}}{2}\right]
\label{eq: kdv_rom3}
\end{equation}
with $\ba^0 = \bPhi^\top \bu^0$.

We first study the impact of $\mu$, which is the weight of $\nabla_u H(u)$ in the snapshots, on the numerical performance of structure-preserving ROMs \eqref{eq: kdv_rom3} by varying the value of $\mu$.
As a criterion, the error $\mathcal{E}_\infty$ is evaluated for accuracy.

Figure \ref{Fig: lin_kdv_mu} shows the trend of $\mathcal{E}_\infty$ versus $\mu$ for $r= 40$ and $r= 60$.
When $r= 40$, it is found that the minimum error is achieved at $\mu= 0$ and $\mathcal{E}_\infty= 0.0564$.
But when the dimension of ROM increases to $r= 60$, the minimum error is achieved at $\mu= 1$ and $\mathcal{E}_\infty= 7.3064\times 10^{-4}$; when $\mu=0$, the error $\mathcal{E}_\infty= 7.3882\times 10^{-4}$.
\begin{figure}[htb]
\centering
\begin{minipage}[ht]{0.38\linewidth}
\includegraphics[width=1\textwidth]{./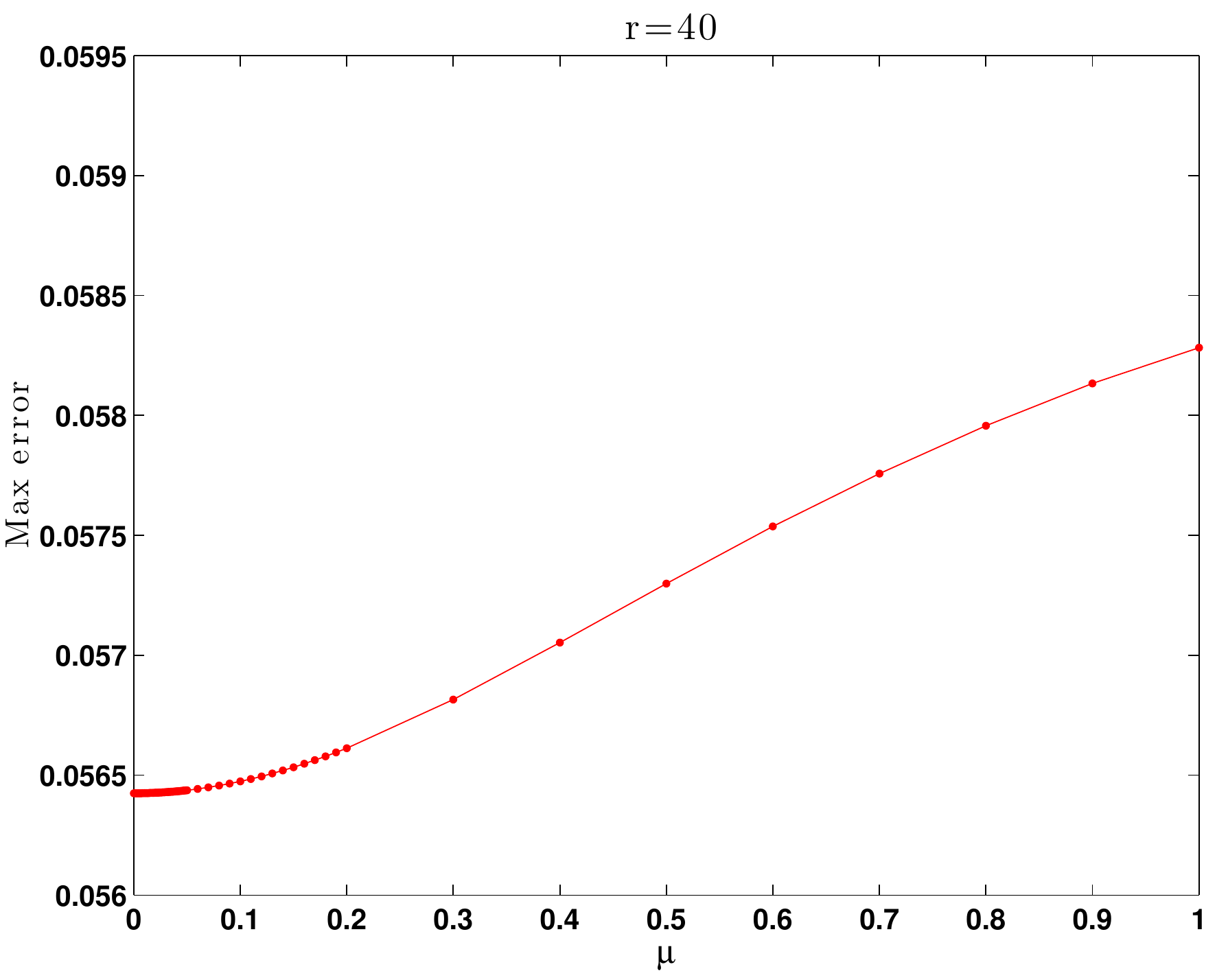}
\end{minipage}
\hspace{1cm}
\begin{minipage}[ht]{0.38\linewidth}
\includegraphics[width=1\textwidth]{./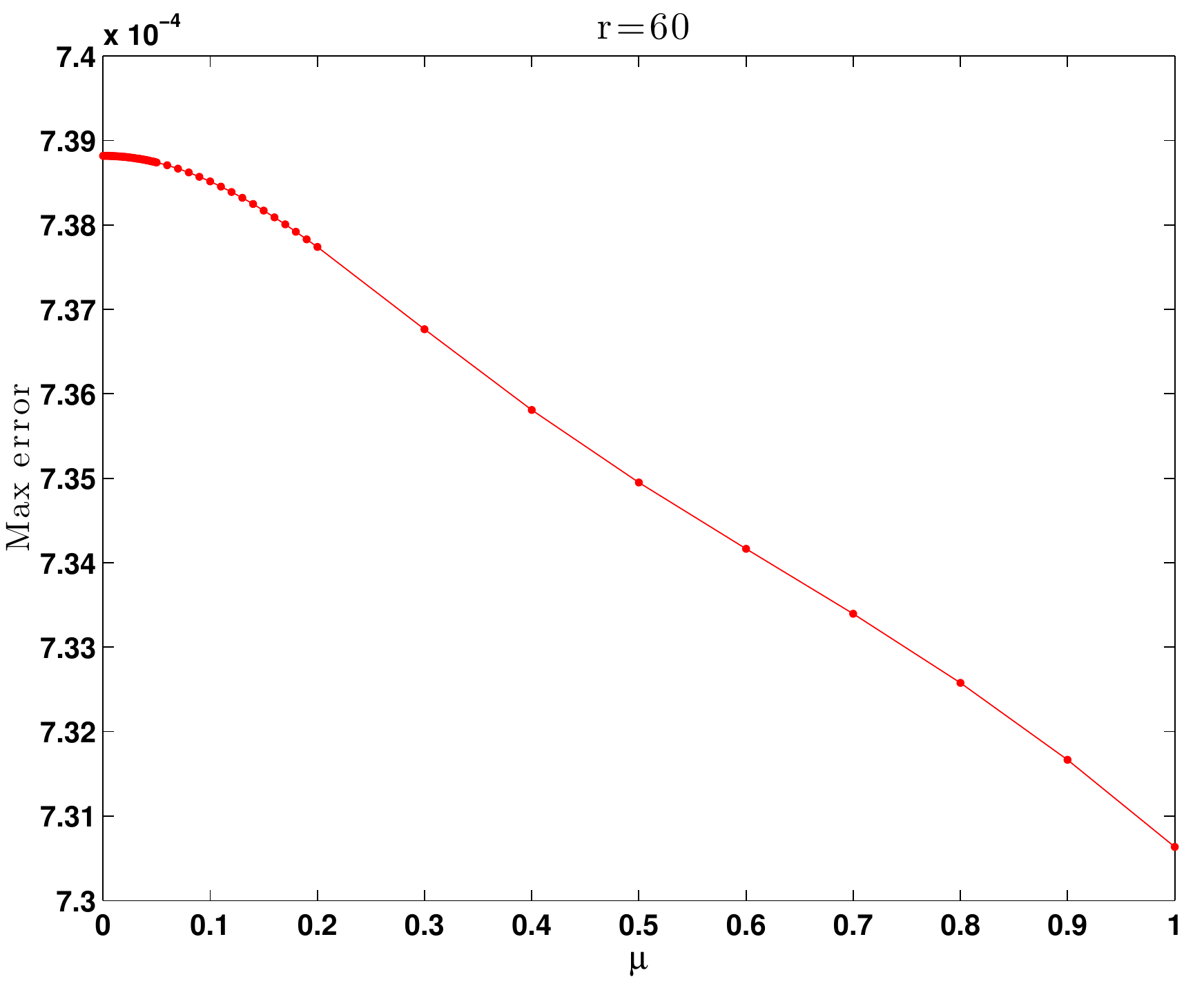}
\end{minipage}
\caption{
Maximum errors of the SP-ROM-0 simulation results versus the values of $\mu$: $r=40$ (left) and $r=60$ (right).
}\label{Fig: lin_kdv_mu}
\end{figure}

It is observed that overall, the proposed structure-preserving ROM achieves better approximation as $r$ increases;
it produces a good numerical solution when $\mu=0$ in general, and the results can be further improved by choosing $\mu$ optimally.
However, to avoid the price paid for tuning the free parameter, in the rest of this example, we will focus on the case $\mu= 0$.

\begin{figure}[htb]
\centering
\begin{minipage}[ht]{0.41\linewidth}
\includegraphics[width=1\textwidth]{./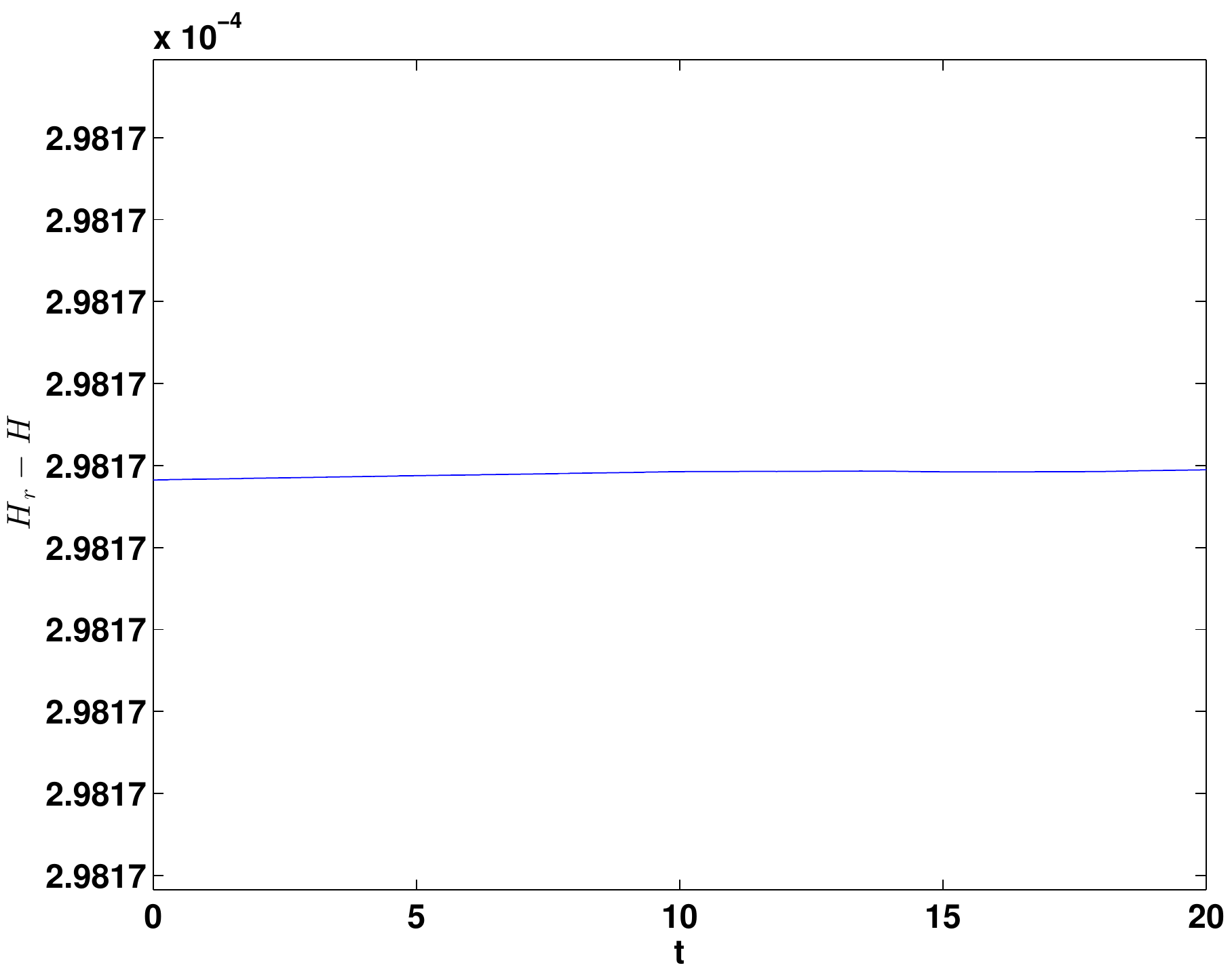}
\end{minipage}
\hspace{1cm}
\begin{minipage}[ht]{0.41\linewidth}
\includegraphics[width=1\textwidth]{./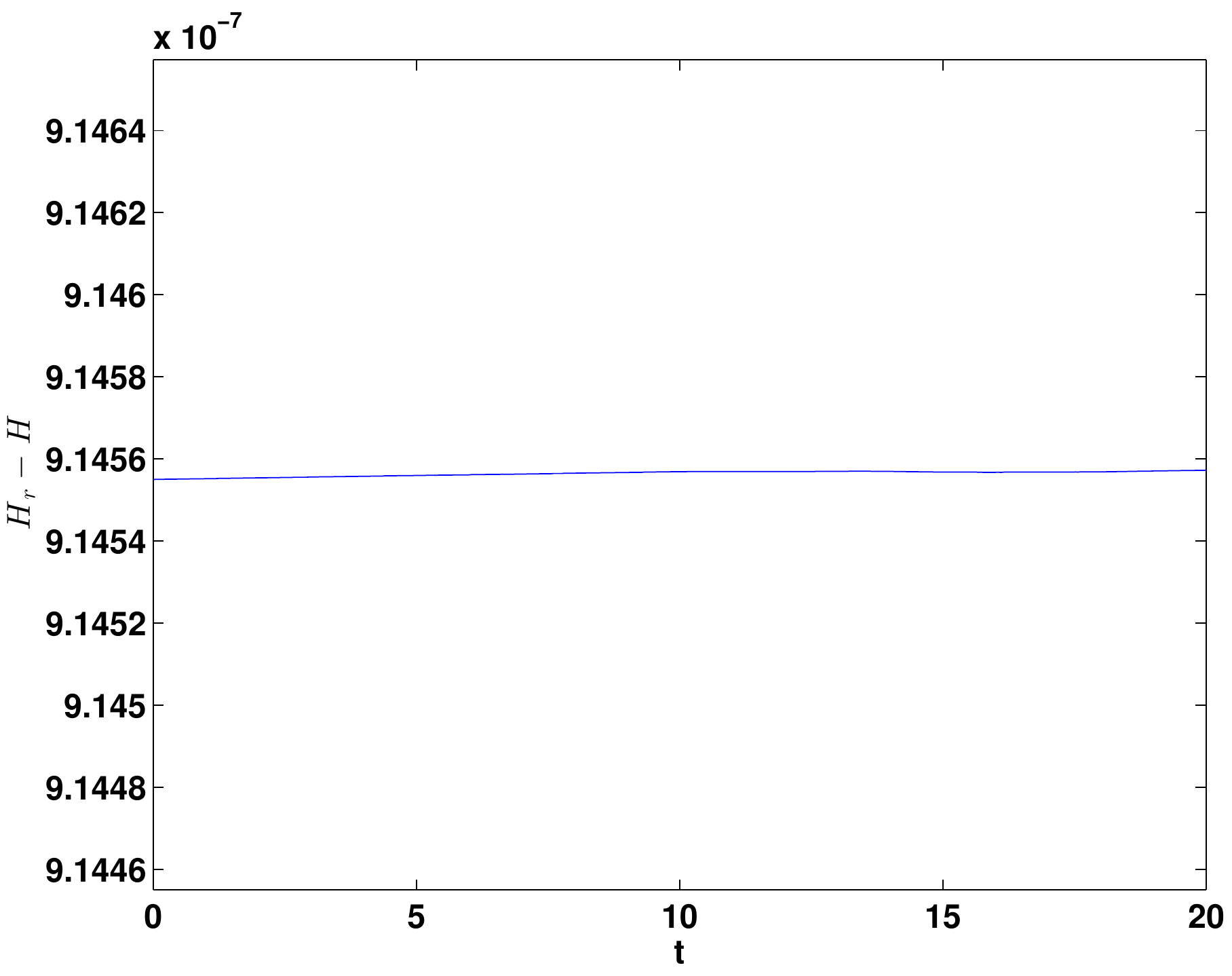}
\end{minipage}
\caption{Time evolution of the Hamiltonian function approximation error $\mathcal{H}_r(t)-\mathcal{H}(t)$ at $\mu= 0$: $r= 40$ (left) and $r= 60$ (right).
Note that the error magnitudes do not change with time during the simulations.
}\label{Fig: kdv_sprom0_r40}
\end{figure}

The time evolution of energy approximation errors in SP-ROM-0 with $\mu= 0$ are shown in Figure \ref{Fig: kdv_sprom0_r40} for $r= 40$ and $r= 60$, respectively.
It is seen that, when the dimension increases from 40 to 60, the magnitude of the energy discrepancy reduces from $2.9817\times 10^{-4}$ to $9.1456\times 10^{-7}$.
The associated maximum error $\mathcal{E}_\infty$ decreases from $0.0564$ to $7.3064\times 10^{-4}$ as expected.
Since low dimensional cases are more interesting in practice, in the rest of this example, we will focus on improving the numerical performance of $40$-dimensional SP-ROMs by correcting the energy approximation.

\subsubsection{SP-ROMs with corrected energy}
\paragraph{Approach I. SP-ROMs with enriched POD basis (SP-ROM-1)}
In this approach, one new basis function $\bpsi$ generated from the residual of the initial data is added to the basis set $\bPhi$, i.e., $\widetilde{\bPhi}= [\bPhi, \bpsi]$.

The discrete structure-preserving ROM with enriched POD basis (SP-ROM-1) reads
\begin{equation}
\frac{\ba^{k+1}-\ba^k}{\Delta t} =
\bA_r\widetilde{\bPhi}^\top\left[\frac{\alpha}{6}\left( (\widetilde{\bPhi}\ba^k)^2
+(\widetilde{\bPhi}\ba^k)(\widetilde{\bPhi}\ba^{k+1})
+(\widetilde{\bPhi}\ba^{k+1})^2 \right)
+ \rho \widetilde{\bPhi} \frac{\ba^k+\ba^{k+1}}{2}
+ \nu\bB\widetilde{\bPhi} \frac{\ba^k+\ba^{k+1}}{2}\right]
\label{eq: kdv-sp-rom-1}
\end{equation}
with $\ba^0 = \widetilde{\bPhi}^\top \bu^0$.

Since the residual information of initial data is included in the enriched basis, it is expected that the energy at initial time can be exactly captured, thus the structure-preserving ROM can preserve the exact energy.
The $41$-dimensional SP-ROM-1 \eqref{eq: kdv-sp-rom-1} is simulated, whose energy approximation error is shown in Figure \ref{Fig: kdv_sprom1_r40} (left).
It is observed that energy error $\mathcal{H}_r(t)-\mathcal{H}(t)\sim\mathcal{O}(10^{-12})$.
Meanwhile, the maximum error $\mathcal{E}_\infty= 0.050168$.
\begin{figure}[htb]
\centering
\begin{minipage}[ht]{0.41\linewidth}
\includegraphics[width=1\textwidth]{./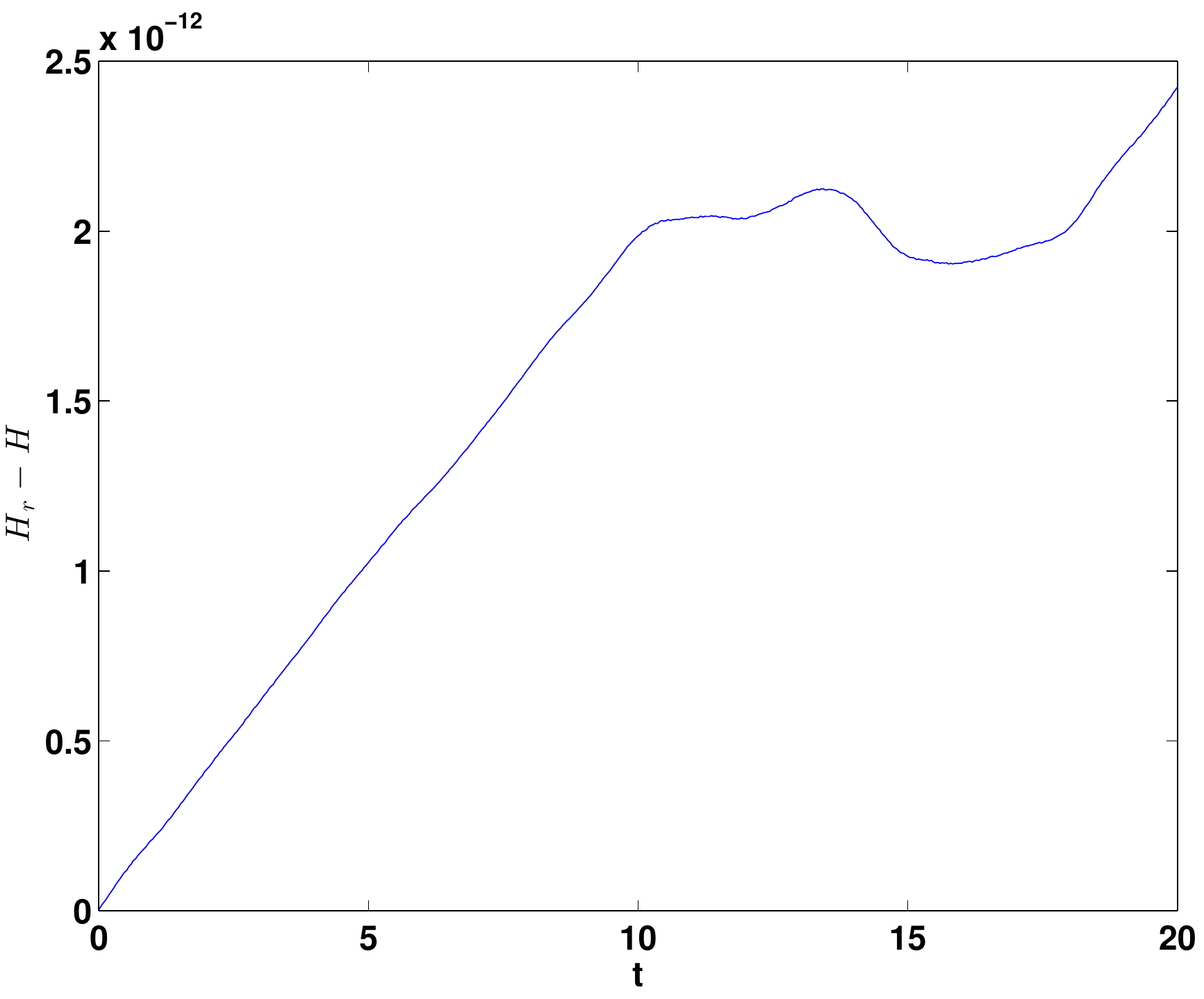}
\end{minipage}
\hspace{1cm}
\begin{minipage}[ht]{0.41\linewidth}
\includegraphics[width=1\textwidth]{./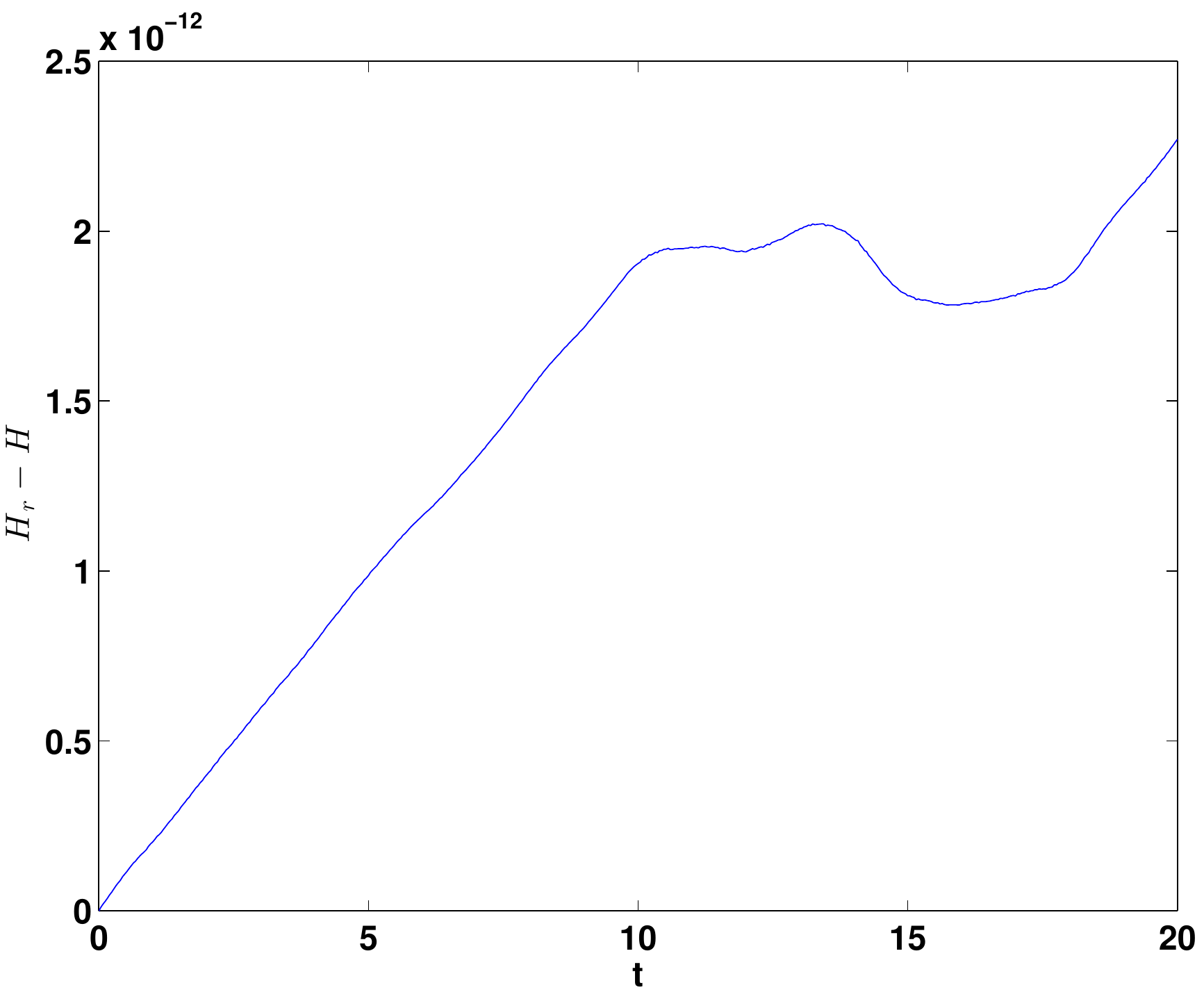}
\end{minipage}
\caption{Time evolution of energy error in the 41-dimensional SP-ROM-1 (left) and 40-dimensional SP-ROM-2 (right) when $\mu=0$. Note that the magnitude of $\mathcal{H}_r(t)-\mathcal{H}(t)$ is $\mathcal{O}(10^{-12})$, thus the energy is accurately captured in both models.
}\label{Fig: kdv_sprom1_r40}
\end{figure}

\paragraph{Approach II. SP-ROMs with POD basis from shifted snapshots}
In this approach, the POD basis is extracted from the shifted snapshots \eqref{eq: snap_shift}.
The corresponding structure-preserving ROM with the new POD basis (SP-ROM-2) is to find $\ba^k$ satisfying
\begin{eqnarray}
\frac{\ba^{k+1}-\ba^k}{\Delta t} &=&
\bA_r\bPhi^\top\Big[\frac{\alpha}{6}\left( (\bu_0+\bPhi\ba^k)^2
+(\bu_0+\bPhi\ba^k)(\bu_0+\bPhi\ba^{k+1})
+(\bu_0+\bPhi\ba^{k+1})^2 \right) \nonumber \\
&+&  \rho (\bu_0+\bPhi \frac{\ba^k+\ba^{k+1}}{2})
+ \nu\bB(\bu_0+\bPhi \frac{\ba^k+\ba^{k+1}}{2})\Big]
\label{eq: kdv-sp-rom-2}
\end{eqnarray}
with $\ba^0 = {\bf 0}$.

Consider the $r= 40$ case, the maximum error of the SP-ROM-2 simulation is $\mathcal{E}_\infty = 0.036574$.
Time evolution of the energy error is plotted in Figure \ref{Fig: kdv_sprom1_r40} (right), which indicates no any visible discrepancy of the energy between the ROM and FOM since $\mathcal{H}_r(t)-\mathcal{H}(t)\sim \mathcal{O}(10^{-12})$.


\subsection{Summary of numerical experiments}
We summarize the test cases in Tables \ref{tab: ex1}-\ref{tab: ex2}.
\begin{table}[htp]
\begin{center}
\caption{Linear wave equations (Section \ref{sec: we}): comparison of ROMs (r=5)}
\label{tab: ex1}
\begin{tabular}{| c | c | c | c | c | c | c |} \hline
  {}                              & FOM  &   G-ROM      & SP-ROM-0 ($\mu=0$)  & SP-ROM-1 ($\mu=0$)      & SP-ROM-2  ($\mu=0$) \\ \hline
  $\mathcal{E}_\infty$ &  --      & 0.4591         & 0.2606         & 0.4138              & 0.1526      \\
  $H_r$    			  & 0.075 & nonconst.    & 0.06788         & 0.075                & 0.075 \\     \hline
\end{tabular}
\end{center}
\end{table}
\begin{table}[htp]
\begin{center}
\caption{Korteweg-de Vries equations (Section \ref{sec: kdv}): comparison of ROMs (r=40)}
\label{tab: ex2}
\begin{tabular}{| c | c | c | c | c | c | c |} \hline
  {}                              & FOM  &   G-ROM      & SP-ROM-0  ($\mu=0$)  & SP-ROM-1 ($\mu=0$)    & SP-ROM-2 ($\mu=0$) \\ \hline
  $\mathcal{E}_\infty$ &  --       & 0.02964         & 0.0564         & 0.05017              & 0.03657      \\
  $H_r$    			  & -1.1317 & nonconst.    & -1.1314         & -1.1317                & -1.1317 \\     \hline
\end{tabular}
\end{center}
\end{table}

Based on the preceding two test experiments, we draw the following conclusions:
(i) the structure-preserving ROMs (SP-ROM-0, SP-ROM-1, and SP-ROM-2) are able to keep the energy a constant;
(ii) choosing an optimal weight $\mu$ in the snapshot set helps improve the accuracy of reduced-order approximations, but a more straightforward choice $\mu= 0$ can yields a good approximation without paying the price of tuning the free parameter;
(iii) the structure-preserving ROMs with corrected energy (SP-ROM-1 and SP-ROM-2) are able to achieve  accurate Hamiltonian function; while SP-ROM-2 has the better accuracy in the state variable approximation than SP-ROM-1;
(iv) overall, SP-ROM-2 obtains more accurate state variable solution and energy approximation, which outperforms the other ROMs discussed in this paper.

\section{Conclusions}
\noindent \indent One of the most important features of Hamiltonian PDE systems is possessing some invariant Hamiltonian functions, which represent important physical quantities such as the total energy of the system.
The standard Galerkin projection-based POD-ROM is not able to inherit this property in its discretized system.
In this paper, we develop new structure-preserving POD-ROMs for Hamiltonian PDE systems, which use the same Galerkin projection strategy, but keep the Hamiltonian constant by developing a new coefficient matrix for the reduced-order dynamical system.
With the use of the POD basis from shifted snapshots, the structure-preserving ROM (SP-ROM-2) is able to produce an exact Hamiltonian and an accurate approximation of the state variables.
Numerical experiments demonstrate the effectiveness of the proposed method.
This approach can be easily extended to other systems such as port-Hamiltonian systems and dissipative gradient systems, which will be a explored in our future research.


\section{Acknowledgements}
Y. Gong's work is partially supported by China Postdoctoral Science Foundation through Grants 2016M591054.
Q. Wang's research is partially supported by NSF grants DMS-1200487 and DMS-1517347, as well as an NSFC grant 11571032. 
Z. Wang is grateful for the supports from the National Science Foundation through Grants DMS-1522672, and the office of the Vice President for Research at University of South Carolina through the ASPIRE-I program. 

\bibliographystyle{unsrt} 
\bibliography{bib_structure,bib_thesis}
\end{document}